\newenvironment{breakablealgorithm}
{
		\begin{center}
			\refstepcounter{algorithm}
			\hrule height.8pt depth0pt \kern2pt
			\renewcommand{\caption}[2][\relax]{
				{\raggedright\textbf{\ALG@name~\thealgorithm} ##2\par}%
				\ifx\relax##1\relax 
				\addcontentsline{loa}{algorithm}{\protect\numberline{\thealgorithm}##2}%
				\else 
				\addcontentsline{loa}{algorithm}{\protect\numberline{\thealgorithm}##1}%
				\fi
				\kern2pt\hrule\kern2pt
			}
		}{
		\kern2pt\hrule\relax
	\end{center}
}
\newtheorem{theorem}{Theorem}[section]
\newtheorem{proposition}  {Proposition}[section]
\newtheorem{assumption} {Assumption}[section]
\newtheorem{remark} {Remark}[section]
\newcommand\bcase{\begin{numcases}{}}
\newcommand\ecase{\end{numcases}}
\begin{document}
	\title{Efficient Algorithm for QCQP problem with Multiple Quadratic Constraints}
	\author{Yin H. \thanks{Yin H. is the student of the College of Science, Zhejiang Sci-Tech University,
			Hangzhou,  China, 310018.  E-mail: hs970233@163.com.
			}%
		}
\maketitle
\begin{abstract}
	Starting from a classic financial optimization problem, we first propose a cutting plane algorithm for this problem. Then we use spectral decomposition to tranform the problem  into an equivalent D.C. programming problem, and the corresponding upper bound estimate is given by the  SCO algorithm; then the corresponding lower bound convex relaxation is given by McCormick envelope. Based on this, we propose a global algorithm for this problem and establish the convergence of the algorithms. What's more, the algorithm is still valid for QCQP with multiple quadratic constraints and quadratic matrix in general form.
\end{abstract}
\section{Problem reformulation}
\subsection{Simple introduction to the program}

	 Brown\cite{Brown2010} analyzed the portfolio strategies when facing uncertain liquidity risks and proposed the corresponding portfolio optimization model. What's more, he eplored further by introducing the shock after the first-period trading and remains the second-period problem unsolved. Let's first review the background of the problem.
	 
	  Consider the continuous trading model in a short term. $Y_t\in \mathbb{R}^n,t\in[0,\tau]$ is the rate that the investor trades the assets. The investor's holdings are denoted by $X_t\in\mathbb{R}^n, X_t=x_0+\int_{0}^{t}Y_s ds$. It's plain to assume that $x_0>0$ and $Y_t$ is an $L^2-$ function.
	
	The price of the assets at time t can be given by $P_t\in\mathbb{R}^n$,
	\begin{equation}
		P_t=q+\Gamma X_t+\Lambda Y_t
	\end{equation}
	
	The first term, $q\in\mathbb{R}^n$, the intercept of the linear model, account for the present value of underlying future cash flows. The second and third terms partition the price impact of trading into permanent and temporary components. The permanent component measures the change in price that is independent of the trading rate and is affected by the investor's holdings. The temporary component 
	reflects real-time price changes brought by trading. Both $\Gamma\in \mathbb{R}^n$ and $\Lambda\in\mathbb{R}^n$ are diagonal matrices, the diagonal entries of which are the price impacts of asset $i$. We assume that the components of $\Lambda$ and $\Gamma$ are positive.
	
	After the first-period trade, the leverage crisis has been resolved and the investor still has some chips to chase the potential profit. However, because of the unforeseen equity withdrawals or higher cash requirements, there remains uncertainty about the need for further liquidity during the second period. We model the shock as an early equity withdrawal;and assume that the amount withdrawn is a Bernoulli random variable $\Delta$ such that
	\begin{equation*}
		\Delta = \left\{\begin{aligned}
			\delta,&\quad probablity\quad\pi,\\
			0,&\quad probablity\quad 1-\pi.
		\end{aligned}\right.
	\end{equation*}

	Before moving towards to detail optimization problem, we shall discuss some technical assumptions on the parameters of the problem.
	\begin{assumption}
		The primal problem is strictly feasible and $\rho_1e_0-l_0<0$.
	\end{assumption}

	It's plain to have such an assumption, which accounts for that the leverage decreasing is immediate. The investor should make decisions as soon as possible to save the day.

	The two-period model and details are reformulated as
	\begin{equation}\label{ini}
	\begin{aligned}
		\max\quad&E_\Delta e_2=(1-\pi)e_1+\pi e_2\\
		s.t.\quad&\rho_1e_1-l_1\geq0,\\
		&\rho_2e_2-l_2\geq0,\\
		&-x_0\leq y_1\leq0,\\
		&-x_1\leq y_2\leq0.
	\end{aligned}
\end{equation}
where the expectation of the objective function is 
	\begin{equation*}
	\begin{aligned}
		E_\Delta e_2&=(1-\pi)e_1+\pi e_2\\
		&=e_0-\pi\delta+\begin{bmatrix}
			\Gamma x_0\\
			\pi\Gamma x_0
		\end{bmatrix}^t\begin{bmatrix}
			y_1\\
			y_2
		\end{bmatrix}-\begin{bmatrix}
			y_1\\
			y_2
		\end{bmatrix}^t\begin{bmatrix}
			\Lambda-\frac{1}{2}\Gamma &-\frac{\pi}{2}\Gamma\\
			-\frac{\pi}{2}\Gamma &\pi(\Lambda-\frac{1}{2}\Gamma)
		\end{bmatrix}\begin{bmatrix}
			y_1\\
			y_2
		\end{bmatrix}
	\end{aligned}
\end{equation*}
The other variables and notations can be shown as
	\begin{equation*}
	\begin{aligned}
		l_1&=l_0+p^t_0y_1+y_1^T(\Lambda+\frac{1}{2}\Gamma)y_1,\\
		e_1&=e_0+x_0^T\Gamma y_1-y_1^T(\Lambda-\frac{1}{2}\Gamma)y_1.
	\end{aligned}
\end{equation*}
\begin{equation*}
	\begin{aligned}
		l_2&=l_0+\Delta+\begin{bmatrix}
			p_0\\
			p_0
		\end{bmatrix}^T\begin{bmatrix}
			y_1\\
			y_2
		\end{bmatrix}+\begin{bmatrix}
			y_1\\
			y_2
		\end{bmatrix}^T\begin{bmatrix}
			\Lambda+\frac{1}{2}\Gamma &\frac{1}{2}\Gamma\\
			\frac{1}{2}\Gamma & \Lambda+\frac{1}{2}\Gamma
		\end{bmatrix}\begin{bmatrix}
			y_1\\
			y_2
		\end{bmatrix},\\
		e_2&=e_0-\Delta+\begin{bmatrix}
			\Gamma x_0\\
			\Gamma x_0
		\end{bmatrix}^T\begin{bmatrix}
			y_1\\
			y_2
		\end{bmatrix}-\begin{bmatrix}
			y_1\\
			y_2
		\end{bmatrix}^T\begin{bmatrix}
			\Lambda-\frac{1}{2}\Gamma & -\frac{1}{2}\Gamma\\
			-\frac{1}{2}\Gamma & \Lambda-\frac{1}{2}\Gamma
		\end{bmatrix}\begin{bmatrix}
			y_1\\
			y_2
		\end{bmatrix}.
	\end{aligned}\\
\end{equation*}

	It's easy to see that the problem is a quadratic constrained quadratic programming(QCQP). Brown has made a lot of progress under convexity assumption;Chen present a Lagrangian algorithm for the first-period quadratic problem. Luo has considerd more general situation by generalizing the price matrix.

\subsection{Optimality condition}
	In this section, we derive the optimality condition by first-order optimal conditions without convexity assumptions\cite{Chen2014}. The results with convexity assumption can be referred in Brown.
	\begin{proposition}\label{optimal}
		The second leverage constraint of problem $(\ref{ini})$ is active at its optimal solution.
	\end{proposition}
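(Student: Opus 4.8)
The plan is to argue by contradiction using the first-order (KKT) conditions, exploiting the fact that the second-period control $y_2$ enters the model only in three places: the objective (through the term $\pi e_2$), the second leverage constraint $g_2 := \rho_2 e_2 - l_2 \ge 0$, and its own box constraint $-x_1 \le y_2 \le 0$. Neither the first leverage constraint nor the $y_1$-box depends on $y_2$. Hence, holding the optimal first-period control $y_1^\ast$ fixed, the optimality of $y_2^\ast$ is governed entirely by the stationarity condition attached to $\pi e_2$, $g_2$, and the $y_2$-box. I would assume, for contradiction, that $g_2$ is inactive at the optimum, i.e.\ $\rho_2 e_2 - l_2 > 0$; by complementary slackness the associated multiplier $\mu_2$ must then vanish.

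First I would isolate the $y_2$-dependence of $e_2$ and $l_2$ from the given block forms. Writing $x_1 = x_0 + y_1^\ast$ and using that $\Gamma,\Lambda$ are diagonal, the cross terms combine as $\Gamma x_0 + \Gamma y_1^\ast = \Gamma x_1$, giving
\begin{align*}
e_2 &= \mathrm{const}(y_1^\ast) + (\Gamma x_1)^\top y_2 - y_2^\top(\Lambda - \tfrac12\Gamma) y_2,\\
l_2 &= \mathrm{const}(y_1^\ast) + (p_0 + \Gamma y_1^\ast)^\top y_2 + y_2^\top(\Lambda + \tfrac12\Gamma) y_2.
\end{align*}
With $\mu_2 = 0$, the stationarity in $y_2$ reduces to maximizing $\pi e_2$ over the box alone. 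Because $\Gamma,\Lambda$ are diagonal the subproblem separates across coordinates into one-dimensional quadratics on the intervals $[-x_{1,i},0]$, each with strictly positive linear coefficient $(\Gamma x_1)_i > 0$; the candidate maximizers are the endpoints and, in nonconcave coordinates, the interior vertex, which lies at a strictly positive argument and is thus excluded by $y_2 \le 0$. I would use this to identify the relevant candidate as the no-trade point $y_2^\ast = 0$.

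Then I would test feasibility there. Substituting $y_2 = 0$ into the block expressions and comparing with the given formulas for $e_1,l_1$ yields the clean identities $e_2|_{y_2=0} = e_1 - \Delta$ and $l_2|_{y_2=0} = l_1 + \Delta$, so that
\begin{equation*}
g_2\big|_{y_2=0} = \rho_2 e_1 - l_1 - (\rho_2 + 1)\Delta.
\end{equation*}
Conditioning on the shock $\Delta = \delta > 0$ and invoking Assumption~1.1 together with the first-period constraint $\rho_1 e_1 - l_1 \ge 0$, I would deduce $g_2|_{y_2=0} < 0$. This contradicts the feasibility of the putative optimum $y_2^\ast = 0$, so the hypothesis $\mu_2 = 0$ (equivalently $g_2 > 0$) is untenable, and the second leverage constraint must be active.

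The hard part will be the middle step under the paper's no-convexity stance. When $\Lambda - \tfrac12\Gamma$ fails to be positive definite, $e_2$ is not concave in $y_2$, so one cannot simply invoke ``concave max at $y_2=0$''; instead I must lean on the diagonal, separable structure to enumerate the one-dimensional optima and rule out the corner $y_2 = -x_1$ as an improving feasible point. The second delicate point is the sign argument for $g_2|_{y_2=0}$: it compares the post-shock leverage deficit against the first-period slack, and establishing $g_2|_{y_2=0} < 0$ rigorously is exactly where Assumption~1.1 (strict feasibility together with $\rho_1 e_0 - l_0 < 0$) and the positivity of $\delta$ enter, since $\rho_1 e_1 - l_1 \ge 0$ alone only bounds $l_1$ from above and does not by itself control $\rho_2 e_1 - l_1$.
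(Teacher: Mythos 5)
Your overall strategy---fix $y_1^\ast$, use complementary slackness to kill the multiplier of the second leverage constraint, force $y_2^\ast=0$ from the separable structure of $e_2$, and then derive a contradiction---starts out parallel to the paper's, but diverges at the decisive step. The paper never tests feasibility of $y_2=0$ against the second leverage constraint; it closes every branch purely by sign contradictions on the box-constraint multipliers (e.g.\ $\nu^\ast_{m+i}=2\pi\lambda_i y_{2,i}\leq 0$ against $\nu^\ast_{m+i}>0$, and in the remaining branch $y_{1}=-x_0$, $\mu^\ast_{m+i}=2\lambda_i y_{1,i}<0$ against nonnegativity, leaving only the degenerate ``wiped out'' case). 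Your route instead hinges on showing that the no-trade point is \emph{infeasible}, i.e.\ $g_2|_{y_2=0}=\rho_2 e_1-l_1-(\rho_2+1)\delta<0$. That is where the proof breaks.

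The inequality $g_2|_{y_2=0}<0$ does not follow from the hypotheses you cite. Assumption~1.1 constrains the \emph{pre-trade} quantities ($\rho_1 e_0-l_0<0$) and says nothing about $e_1,l_1$ after an arbitrary feasible first-period trade; and the first-period constraint $\rho_1 e_1-l_1\geq 0$ bounds $\rho_2 e_1-l_1$ from \emph{below} (it gives $\rho_2 e_1-l_1\geq(\rho_2-\rho_1)e_1$ when $e_1\geq 0$), which points in the wrong direction: you need an \emph{upper} bound by $(\rho_2+1)\delta$. Concretely, if the period-one sale overshoots so that $\rho_1 e_1-l_1$ has large positive slack, or if $\rho_2$ is large and $e_1>\delta$, then $\rho_2 e_1-l_1-(\rho_2+1)\delta>0$ and the point $(y_1^\ast,0)$ is strictly feasible for the second constraint, so no contradiction arises. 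Your argument would close only under additional hypotheses not present in the paper, e.g.\ $\rho_2\leq\rho_1$ \emph{together with} activity of the first leverage constraint at the optimum (which gives $g_2|_{y_2=0}=\rho_2e_1-l_1-(\rho_2+1)\delta\leq-(\rho_2+1)\delta<0$), or taking $\delta$ at its maximal sustainable level $\delta^\ast$. You flag this step yourself as ``delicate,'' but flagging it is not proving it; as written the contradiction is asserted, not established. The first half of your argument (the only KKT point of the box-constrained maximization of $e_2$ in $y_2$ is $y_2=0$, since the interior critical point sits at $\gamma_i x_{1,i}/(2\lambda_i-\gamma_i)$ with the wrong sign and the corner $y_{2,i}=-x_{1,i}$ has gradient $2\lambda_i x_{1,i}>0$ pointing inward) is sound and can be made rigorous along the lines you sketch; it is essentially the content of the paper's equation for $\nu^\ast_{m+i}$.
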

	\begin{proof}
		Since the feasible set of problem $(\ref{ini})$ is compact and our model is continuous, there exists an optimal solution $y^*$. On the contrary, we assume that the second leverage constraint is not active at the optimal solution. Denote $y^*=(y_1^*,y^*_2)^T, y_1^*=(y^*_{1,1},y^*_{1,2},...,y^*_{1,m})^T,y_2^*=(y^*_{2,1},y^*_{2,2},...,y^*_{2,m})^T$, by the first order optimality condition, there exists $\mu^*=(\mu^*_0,\mu^*_1,...,\mu^*_{2m})\geq0,\nu^*=(\nu^*_0,\nu^*_1,...,\nu^*_{2m})\geq0$,for $i=1,2,...,m$ satisfying that
			\begin{align}
				\mu_0^*(\rho_1e_1(y^*_1)-l_1(y_1^*))&=0,\label{1st}\\
				\mu_i^*y^*_{1,i}&=0,\\
				\mu^*_{m+i}(y^*_{1,i}+x_{0,i})&=0,\\
				\nu^*_0(\rho_2e_2(y^*)-l_2(y^*))&=0,\\
				\nu^*_i(y^*_{1,i}+y^*_{2,i})&=0,\label{vi}\\
				\nu^*_{m+i}(y^*_{1,i}+y^*_{2,i}+x_{0,i})&=0,\label{vmi}\\
				\begin{split}
					-\nabla e_2(y^*_1)+\mu_0^*\nabla g_0(y^*_1)+\sum_{i=1}^{m}(\mu^*_i\nabla g_{0,i}(y^*_1)+\mu^*_{m+i}\nabla g_{0,m+i}(y^*_1))+\\ \nu^*_0\nabla g_1(y^*_1)+\sum_{i=1}^{m}(\nu^*_i\nabla g_{1,i}(y^*_1)+\nu^*_{m+i}\nabla g_{1,m+i}(y^*_1))&=0,
				\end{split}\label{gradient1}\\
				-\nabla e_2(y^*_2)+v^*_0\nabla g_1(y^*_2)+\sum_{i=1}^{m}(\nu^*_i\nabla g_{1,i}(y^*_1)+\nu^*_{m+i}\nabla g_{1,m+i}(y^*_1)) &=0\label{gradient2}.
			\end{align}
		where
		\begin{equation*}
			\begin{aligned}
				g_0(y)&=l_1(y)-\rho_1e_1(y), g_{0,i}(y)=y_{1,i},g_{0,m+i}(y)=-y_{1,i}-x_{0,i},i=1,...,m,\\
				g_1(y)&=l_2(y)-\rho_2e_2(y),g_{1,i}=y_{1,i}+y_{2,i},g_{1,m+i}=-y_{1,i}-y_{2,i}-x_{0,i},i=1,...,m.
			\end{aligned}
		\end{equation*}
	By the assumption that the second leverage constraint is not active, we have $\nu^*_0=0$. For $i=1,2,...m$, the gradient conditions (\ref{gradient1}) and (\ref{gradient2}) become
	\begin{align}
		\begin{split}
			2(\lambda_i-\frac{1}{2}\gamma_i)y_{1,i}-\pi\gamma_iy_{2,i}-\gamma_ix_{0,i}+\mu^*_0\nabla g_0(y^*_{1,i})+\mu_i^*-\mu_{m+i}^*\\
			+\nu_i^*-\nu_{m+i}^*&=0,
		\end{split}\label{grad1}\\
		\begin{split}
			2\pi(\lambda_i-\frac{1}{2}\gamma_i)y_{2,i}-\pi\gamma_iy_{1,i}-\pi\gamma_ix_{0,i}
			+\nu_i^*-\nu_{m+i}^*&=0
		\end{split}\label{grad2}
	\end{align}
	By (\ref{vi}) (\ref{vmi}) and $x_0>0$, $\nu^*_i,\nu^*_{m+i},i=1,...,m$ can not be positive simultaneously, we consider the following cases:
	\begin{itemize}
		\item[1.] $\nu^*_i=0,\nu^*_{m+i}>0.$
		By condition (\ref{vmi}), we have 
		\begin{equation*}
			y_{1,i}^*+y_{2,i}^*+x_{0,i}=0,i=1,...,m.
		\end{equation*}
	From (\ref{grad2}), we reach 
	\begin{equation*}
		\begin{aligned}
			\nu_{m+i}^*&=2\pi\lambda_iy_{2,i}-\pi\gamma_i(y_{1,i}^*+y_{2,i}^*+x_{0,i})\\
			&=2\pi\lambda_i y_{2,i}\\
			&\leq0.
		\end{aligned}
	\end{equation*}
	which contradicts to requirement of positiveness.
	\item[2.] $\nu^*_i=0,\nu^*_{m+i}=0.$
	From equation (\ref{grad2})
	\begin{equation*}
		2\lambda_i y_{2,i}=\gamma_i(y_{1,i}+y_{2,i}+x_{0,i})
	\end{equation*}
	\begin{itemize}
		\item 	$y_{1,i}+y_{2,i}+x_{0,i}>0$, obviously contradictory.
		\item	$y_{1,i}+y_{2,i}+x_{0,i}=0, \forall i=1,2,...m$
		we have$y_{1,i}=-x_{0,i},y_{2,i}=0,\mu_i^*=0$.
		Consider the first leverage constraint and the corresponding libilities $l_1$ and the equity $e_1$
		\begin{equation*}
			\begin{aligned}
				\mu_{m+i}^*&=2\lambda_i y_{1,i}+\mu_0^*\nabla g_0(y_{1,i}^*),\\
				l_1+e_1&=l_0+e_0-p_0^Tx_0=0.
			\end{aligned}
		\end{equation*}
		Then we can turn the first leverage constraint into
		\begin{equation*}
			\begin{aligned}
				\rho_1e_1-l_1&=(\rho_1+1)e_1\\
				&=(\rho_1+1)[e_0-x_0^T(\Lambda+\frac{1}{2}\Gamma)]\\
				&\geq0.
			\end{aligned}
		\end{equation*}
	\begin{itemize}
		\item $\rho_1e_1-l_1=0$.
		
		 We have $e_1=0,l_1=0,x_1=0$, i.e. after the first-period trade, the investor is debt free and has no chips to continue the second-period trade.
		\item $\rho_1e_1-l_1>0$.
		
		 By (\ref{1st}), $\mu_0^*=0$, the equation (\ref{grad1}) can be simplified into
		 \begin{equation*}
		 	\mu_{m+i}^*=2\lambda_iy_{1,i}<0
		 \end{equation*}
	 which contradicts to the non-negativeness of the coefficients.
	\end{itemize}
	\end{itemize}
	\end{itemize}
	\end{proof}
\subsection{Measure of Shock Level}

	With the proposition \ref{optimal}, we can turn to the shock level to find some interesting properties.
	
	From $\rho_2e_2-l_2\geq0$, we have
	\begin{equation*}
		\begin{aligned}
			\delta\leq\frac{\rho_2e_0-l_0+G(y_1,y_2)}{\rho_2+1}
		\end{aligned}
	\end{equation*}
	then $\delta$ can reach its maximum by
	\begin{equation*}
		\delta^* = \frac{\rho_2e_0-l_0+G^*(y_1,y_2)}{\rho_2+1}
	\end{equation*}
	where $G^*(y_1,y_2)$ equals to the optimal value of the following problem:
	\begin{equation*}
		\begin{aligned}
			\max\quad &G(y_1,y_2)=\begin{bmatrix}
				\rho_2\Gamma x_0-p_0\\
				\rho_2\Gamma x_0-p_0
			\end{bmatrix}^T\begin{bmatrix}
				y_1\\
				y_2
			\end{bmatrix}-\\
			&\begin{bmatrix}
				y_1\\
				y_2
			\end{bmatrix}^T\begin{bmatrix}
				\rho_2(\Lambda-\frac{1}{2}\Gamma)+(\Lambda+\frac{1}{2}\Gamma) &\frac{1-\rho_2}{2}\Gamma\\
				\frac{1-\rho_2}{2}\Gamma & \rho_2(\Lambda-\frac{1}{2}\Gamma)+(\Lambda+\frac{1}{2}\Gamma)
			\end{bmatrix}\begin{bmatrix}
				y_1\\
				y_2
			\end{bmatrix}\\
			s.t.\quad&\rho_1e_1-l_1\geq0,\\
			&-x_0\leq y_1\leq0,\\
			&-x_0\leq y_1+y_2\leq0.
		\end{aligned}
	\end{equation*}
	
	\begin{assumption}
	 	The first-period trade $y_1=-x_0/2$ generates sufficient cash to meet the first-period leverage constraint.
	\end{assumption}
		We followed the assumption of Brown\cite{Brown2010}. The two-period trade is extreme, so it's reasonable to make such an assumption. What'more, half sale is very common in trading. If not satisfied, the investor is exhausted after fire sale during the first period and he may not have enough chips to cope with the second period trade.
	
		As we can see, the assumption with the form one can be explained as 
	\begin{equation*}
		\rho_1e_1-l_1|_{y_1=0}<0,\\
		\rho_1e_1-l_1|_{y_1=-\frac{x0}{2}}>0.
	\end{equation*}
	we take some notations to simply the above programming.
	\begin{equation*}
		\begin{aligned}
			\max\quad&G(y_1,y_2)=\begin{bmatrix}
				\rho_2\Gamma x_0-p_0\\
				\rho_2\Gamma x_0-p_0
			\end{bmatrix}^T\begin{bmatrix}
				y_1\\
				y_2
			\end{bmatrix}-\begin{bmatrix}
				y_1\\
				y_2
			\end{bmatrix}^T\begin{bmatrix}
				T & S\\
				S & T
			\end{bmatrix}\begin{bmatrix}
				y_1\\
				y_2
			\end{bmatrix}\\
			s.t.\quad&\rho_1e_1-l_1\geq0,\\
			&-x_0\leq y_1\leq0,\\
			&-x_0\leq y_1+y_2\leq0.
		\end{aligned}
	\end{equation*} 
	where
	\begin{flalign*}
		T=\rho_2(\Lambda-\frac{1}{2}\Gamma)+(\Lambda+\frac{1}{2}\Gamma), S=\frac{1-\rho_2}{2}\Gamma.
	\end{flalign*}
	It seems the key is the convexity of the objective function. Actually, when convex, it's easy to have optimal $y^*=(y_1^*,y_2^*)=(-\frac{x_0}{2},-\frac{x_0}{2})$; While removing the convexity assumption, the feasible region is still compact, so the optimal lies on the boundary. Substitute the boundary constraints into the objective function, we have
	\begin{equation}\label{eq:T-S}
		\begin{aligned}
			y^*&=\arg\max-\begin{bmatrix}
				y_1\\
				-x_0-y_1
			\end{bmatrix}^T\begin{bmatrix}
				T &S \\
				S &T
			\end{bmatrix}\begin{bmatrix}
				y_1\\
				-x_0-y_1
			\end{bmatrix}+(\rho_2\Gamma x_0-p_0)^T(-x_0)\\
			&=\arg\max -y_1^T(T-S)y_1-y_1^T(T-S)x_0.
		\end{aligned}
	\end{equation}
	It's easy to verify the matrix $T-S$ is positive definite, so we still have the optimal as $y^*=(-\frac{x_0}{2},-\frac{x_0}{2})$. By the assumption, the first-period leverage constraint is not binding at the point, let's prove the optimality next.
	
	Consider the optimal problem as below
	\begin{equation*}
		\begin{aligned}
			\max\quad&G(y_1,y_2)\\
			s.t.\quad& y_1+y_2=\alpha,\alpha\in[-x_0,0].
		\end{aligned}
	\end{equation*}
	As $\alpha$ range from $[-x_0,0]$, the optimal problem is equal to the primal problem when the optimal solution lies in the feasion region of the primal problem. Substitute the last constraints into the objective function and solve the unconstrained optimization problem we get $y^*_\alpha=(\frac{\alpha}{2},\frac{\alpha}{2})$, so what remains is to prove the following:
	\begin{equation*}
		\delta(-\frac{x_0}{2},-\frac{x_0}{2})\geq\delta(\frac{\alpha}{2},\frac{\alpha}{2}),\forall\alpha\in[-x_0,0].
	\end{equation*}
	Benefit from the special structure of the matrix $\begin{bmatrix}
		T &S\\ S &T
	\end{bmatrix}$, and after some calculation, we arrive at
	\begin{equation*}
		(\alpha+x_0)^T[\frac{1}{2}(T+S)(\alpha-x_0)-(\rho_2\Gamma x_0-p_0)]\geq0,\forall\alpha\in[-x_0,0].
	\end{equation*}
	Recall that $T$ and $S$ are diagonal matrix, by discussing the linear terms, we give the sufficient conditions for the above inequation to hold
	\begin{itemize}
		\item $\rho_2(\lambda_i-\gamma_i)+(\lambda_i+\gamma_i)>0, p_{0,i}>(\rho_2+1)\lambda_i x_{0,i}+\gamma_i x_{0,i},$
		\item $\rho_2(\lambda_i-\gamma_i)+(\lambda_i+\gamma_i)=0, p_{0,i}>\rho_2\gamma_i x_{0,i},$
		\item $\rho_2(\lambda_i-\gamma_i)+(\lambda_i+\gamma_i)<0,p_{0,i}>\frac{1}{2}(\rho_2+1)(\lambda_i+\gamma_i)x_{0,i}.$
	\end{itemize}
	
	With the conditions above, it's sufficient to conclude that $y^*=(-\frac{x_0}{2},-\frac{x_0}{2})$ will be the optimal without the convexity assumption.

\section{Initialization and Cut-Plane Algorithm}
	Before we move towards further research, let's turn to get an initial feasible point by using cut-plane algorithm.
	 
For convenience, the above QCQP programming (\ref{ini}) can be abbreviated as
	\begin{equation}\label{P}\tag{P}
	\begin{aligned}
		\min\quad& f(y)\\
		s.t.\quad& g(y)\leq0,\\
		& h(y)\leq0.\\
		& y\in D.
	\end{aligned}
\end{equation}
	
	Consider the lagrangian dual problem
	\begin{equation*}
		\begin{aligned}
			\max\quad& \theta(t_1,t_2)\\
			s.t.\quad&(t_1,t_2)\geq0.\\
			\theta(t_1,t_2) &= \inf\{f(y)+t_1g(y)+t_2h(y)|y\in D\}.
		\end{aligned}
	\end{equation*}
	
	The above problem can be reformulated as 
		\begin{equation*}\tag{D}\label{D}
		\begin{aligned}
			z^* = \max\quad &z\\
			s.t.\quad & z\leq f(y)+t_1g(y)+t_2g(y),\forall y\in D,\\
			&(t_1,t_2)\geq0.
		\end{aligned}
	\end{equation*}
	 
	 Suppose there exists $\{y^i\}^{k-1}_{i=0}\in D$, consider the following semi-infinite programming
	 \begin{equation}\tag{CD}\label{CD}
	 	\begin{aligned}
	 		\max\quad & z\\
	 		s.t.\quad & z\leq f(y^i)+t_1g(y^i)+t_2h(y^i),i=0,1,...,k-1\\
	 		&(t_1,t_2)\geq0.
	 	\end{aligned}
	 \end{equation}
 	Let $(z^k,t_1^k,t_2^k)$ be an optimal solution to the above approximating problem. If the solution satisfies (\ref{D}), then it is an optimal solution to the lagrangian dual problem. To satisfy this, consider the 
 	following subproblem 
 	\begin{equation}\label{1}
 		\begin{aligned}
 			\min\quad&f(y)+t_1^kg(y)+t_2^kh(y)\\
 			s.t.\quad&y\in D.
 		\end{aligned}
 	\end{equation}
	Let $y^k$ be the optimal to the above problem, so that
	\begin{equation*}
		\theta(t_1^k,t_2^k)= f(y^k)+t_1^kg(y^k)+t_2^kh(y^k)
	\end{equation*}
	If $z^k\leq\theta(t_1^k,t_2^k)$, then $(t_1^k,t_2^k)$ is an optimal solution to lagrangian dual problem (\ref{D}). Otherwise, for $(t_1,t_2)=(t_1^k,t_2^k)$ that is not satisfied with the inequality. Thus wde add the constraint
	\begin{equation*}
		z\leq f(y^k)+t_1g(y^k)+t_2h(y^k)
	\end{equation*}
	to the constraints in (\ref{CD}) and re-solve the linear program. Thus, the point $(z^k,t_1^k,t_2^k)$ is cut away. 
	
	Note that as the constraints adding to the programming (\ref{CD}), the feasible region is narrowing and the optimal solution values of the problem form a nonincreasing sequence $\{z^k\}$.
	
	The cut-plane algorithm is shown as below
	\begin{algorithm}[h]
		\caption{Cut-Plane}
		\begin{algorithmic}[1]
			\STATE Initialization: given $y^0=(-\frac{x_0}{2};-\frac{x_0}{2})\in D, g(y^0)\leq0, h(y^0)\leq0,k=1$
			\WHILE{$z^k>\theta(t_1^k,t_2^k)$}
			\STATE solve problem (\ref{CD}), let the optimal be $(z^k,t_1^k,t_2^k)$.
			\STATE solve subproblem (\ref{1}), let the optimal of which be $y^k$, and 
			\begin{equation*}
				\theta(t_1^k,t_2^k)= f(y^k)+t_1^kg(y^k)+t_2^kh(y^k).
			\end{equation*}
			\IF{$z^k\leq\theta(t_1^k,t_2^k)$}
			\STATE\label{eps} the algorithm terminates with $(t_1^k,t_2^k)$ be the optimal to the lagrangian dual problem.
			\ELSE
			\STATE add constraint
			\begin{equation*}
				z\leq f(y^k)+t_1g(y^k)+t_2h(y^k)
			\end{equation*}
			to the constraints of (\ref{CD}).
			\STATE $k=k+1$.
			\ENDIF
			\ENDWHILE
			\end{algorithmic}
	\end{algorithm}
	\begin{remark}
		To generate a feasible solution to the primal problem, we add constraints
		\begin{equation}
			g(y^k)\leq0,h(y^k)\leq0
		\end{equation}
	to $(\ref{eps})$ to ensure that $y^k$ is a feasible solution to initial problem $(\ref{ini})$.
	\end{remark}

\subsection{Lagrangian problem}
	Consider the subproblem (\ref{1}), under our model, it can be decoupled into independent minimization subproblems like
	\begin{equation}\label{lag}
		\begin{aligned}
			\min\quad&\nu(y)=y^TH_iy-b_i^Ty\\
			s.t.\quad&y\in D_i=\{(y_1,y_2)|-x_{0,i}\leq y_1,y_2\leq0,y_1+y_2\geq -x_{0,i}\}\\
			&H_i\in\mathbb{S}^{2\times2}
		\end{aligned}
	\end{equation}
	\begin{figure}[h]
		\centering
		\includegraphics[scale=0.6]{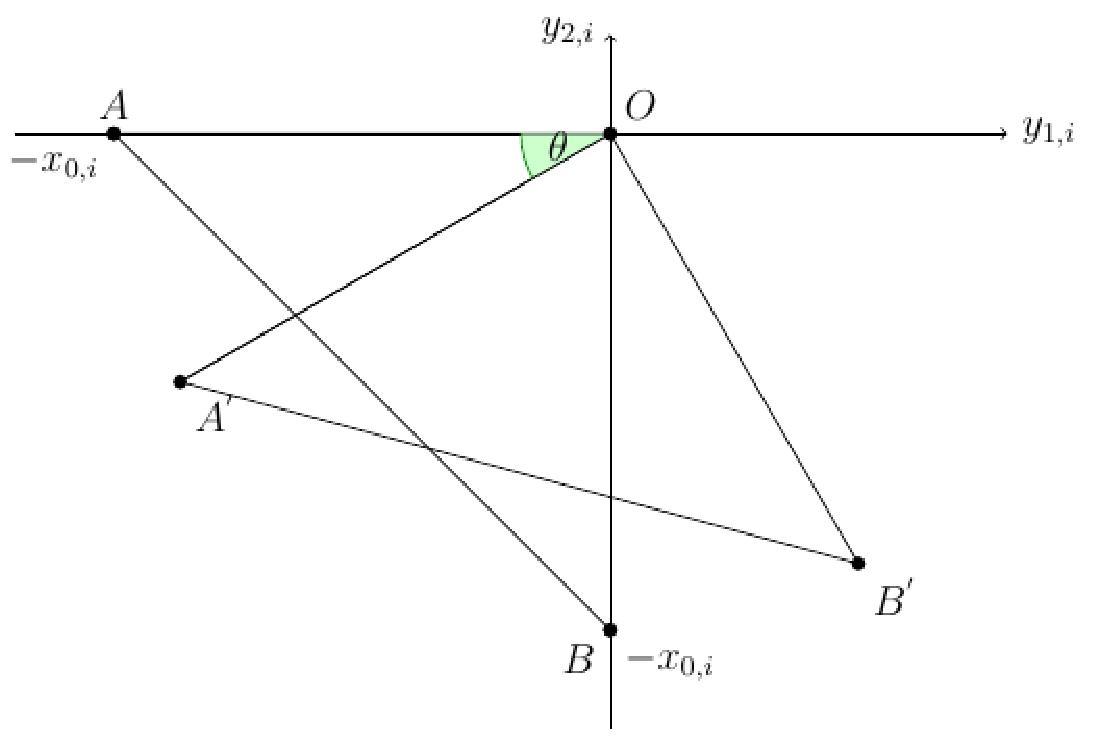}
		\caption{Initial feasible region and the feasible region after rotation transformation}
	\end{figure}

	Since $H_i$ is symetric matrix, we can always find a unit orthogonal matrix $Q_i^TQ_i=I$,
	\begin{equation*}
		Q_i^{T}H_iQ_i=diag(\theta_1,\theta_2)
	\end{equation*}

	 we always have the rotation transformation $y'=Q_iy$ to change the above problem into the following form
	\begin{equation*}
		\begin{aligned}
			\min\quad&u(y)=y^T diag(\theta_1,\theta_2)y+c^Ty\\
			s.t.\quad&y=(y_1,y_2)^T\in\triangle OA^{'}B{'}
		\end{aligned}
	\end{equation*}
	so we need to discuss the distribution of eigenvalues.
	\begin{itemize}
		\item[1.] $\theta_1<0,\theta_2<0$, the problem degenerates into an concave minimization problem, the optimal are avaiable at the vertex $O,A^{'},B^{'}$.
		\item[2.] $\theta_1>0,\theta_2>0$, it's easy to find the explicit solution of the convex problem. For those happen to be located in the feasible region $\triangle OA^{'}B^{'}$, it's sure to be the optimal. If not, the optimal must lie on the boundary of the feasible region, otherwise we can always find the descent direction inside the feasible region towards the explicit solution.
		\item[3.] $\theta_1>0,\theta_2<0$, the problem turns into a D.C. programming. We assert that the optimal lies on the boundary of the feasible region $\triangle OA^{'}B^{'}$.
		
		On the contrary, we assume that $y^*=(y_{1}^*,y_{2}^*)^T\in int \triangle OA^{'}B^{'}$ is the optimal solution. Consider that $\theta_2<0$, the problem is concave about the variable $y_{2}$ and the feasible region is compact, so we can set the descending direction as 
		\begin{equation*}
			e_1=(0,1)^T, e_2=(0,-1)^T.
		\end{equation*}
		such that 
		\begin{equation*}
			\begin{aligned}
				u(y^*+\epsilon e_1)\leq u(y^*) or \\
				u(y^*+\epsilon e_2)\leq u(y^*).
			\end{aligned}
		\end{equation*}
	where $\epsilon>0$, which leads to contradictions.
	\item[4.] $\theta_i=0$. The quadratic terms vanish, the corresponding objective function degenerate into linear function. Since linear function are both convex and concave, after rotation transformation, the optimals of  semi-positive and semi-negative cases are lying on the boundary of the feasible region. What's more, this can be avoid in the numerical experiments by finding the explicit solutions with $MATLAB$.
	\end{itemize}

	From above, we've built a surjection by Orthogonal rotation transform, which does not change the eigenvalues of the initial matrix. So the optimal of the transformed problem located in the boundaries of $\triangle OA^{'}B^{'}$ can be converted to the solution of initial problem located in the boundaries of $\triangle OAB$ by inverse mapping. For the convex and concave situation of problem (\ref{lag}), it's easy to show the explicit solution. Let's turn to the situation of non-convex non-concave, i.e., the solution located on the boundaries.
	\begin{itemize}
		\item[1.] $y^*\in OA$, by letting $y_{2,i}=0$, the problem turns into 
		\begin{equation*}
			\begin{aligned}
				\min\quad & H_{i,11}y_{1,i}^2-b_{i,1}y_{1,i}\\
				s.t.\quad & y_{1,i}\in [-x_{0,i},0].
			\end{aligned}
		\end{equation*}
	If $H_{i,11}>0$, the explicit form of optimal can be given as
	\begin{equation*}
		y^*=(y_{1,i}^*,0)^T,\quad y_{1,i}^*=\max(\min(\frac{b_{i,1}}{2H_{i,11}},0),-x_{0,i})
	\end{equation*}
	Otherwise, the optimal results in the endpoint of interval, i.e. $O$ or $A$.
	\item[2.] $y^*\in OB$, by letting $y_{1,i}=0$, the problem turns into 
	\begin{equation*}
		\begin{aligned}
			\min\quad & H_{i,22}y_{2,i}^2-b_{i,2}y_{2,i}\\
			s.t.\quad & y_{2,i}\in [-x_{0,i},0].
		\end{aligned}
	\end{equation*}
	If $H_{i,22}>0$, the explicit form of optimal can be given as
	\begin{equation*}
		y^*=(0,y_{2,i}^*)^T,\quad y_{2,i}^*=\max(\min(\frac{b_{i,2}}{2H_{i,22}},0),-x_{0,i})
	\end{equation*}
	Otherwise, the optimal can be avaiable at $O$ or $B$.
	\item[3.] $y^*\in AB$, by substitute $y_{2,i}=-x_{0,i}-y_{1,i}$ into the objective function, we have
	\begin{equation*}
		\begin{aligned}
			\min\quad & (H_{i,11}+H_{i,22}-2H_{i,12})y_{i,1}^2-[b_{i,1}-b_{i,2}+2x_{0,i}(H_{i,12}-H_{i,22})]y_{i,1}+c\\
			s.t.\quad & y_{1,i}\in[-x_{0,i},0],\\
			&c=H_{i,22}x_{0,i}^2+b_{i,2}x_{0,i}.
		\end{aligned}
	\end{equation*}
	The same as before, when $H_{i,11}+H_{i,22}-2H_{i,12}>0$, the explicit form of optimal can be shown as
	\begin{equation*}
		\begin{aligned}
			y^*&=(y_{1,i}^*,-x_{0,i}-y_{1,i}^*)\\
			y_{1,i}^* &= \max(\min(y_{exp},0),-x_{0,i}),\quad y_{exp}=\frac{b_{i,1}-b_{i,2}+2x_{0,i}(H_{i,12}-H_{i,22})}{2(H_{i,11}+H_{i,22}-2H_{i,12})}.
		\end{aligned}		
	\end{equation*}
	Otherwise, the optimal should be the endpoint $A$ or $B$.
	\end{itemize}
	
	We conclude above as following proposition
	\begin{proposition}
		The optimal solution to the subproblem (\ref{lag}) can be shown as
		\begin{equation}
			y^*=\left\{\begin{aligned}
			&\arg\min\{\nu(O),\nu(A),\nu(B)\},& &H_i\prec 0,\\
			&\frac{1}{2}H_i^{-1}b_i,& &H_i\succ 0,\frac {1}{2}H_i^{-1}b_i\in D_i,\\
			&
			\begin{split}
				\arg\min\{&\nu(O),\nu(A),\nu(B),\\
				&\nu(y_{OA}),\nu(y_{OB}),\nu(y_{AB})\}
			\end{split} & & else.
			\end{aligned}\right.
		\end{equation}
	where 
	\begin{equation*}
		\begin{aligned}
			y_{OA} &= (y_{temp1}, 0)^T,\quad y_{temp1}=\max(\min(\frac{b_{i,1}}{2H_{i,11}},0),-x_{0,i}),\\
			y_{OB} &=(0,y_{temp2})^T,\quad y_{temp2}=\max(\min(\frac{b_{i,2}}{2H_{i,22}},0),-x_{0,i}),\\
			y_{AB} &= (y_{temp3},-x_{0,i}-y_{temp3})^T,\\
			y_{temp3} &= \max(\min(y_{exp},0),-x_{0,i}),\quad y_{exp}=\frac{b_{i,1}-b_{i,2}+2x_{0,i}(H_{i,12}-H_{i,22})}{2(H_{i,11}+H_{i,22}-2H_{i,12})}.
		\end{aligned}
	\end{equation*}
	\end{proposition}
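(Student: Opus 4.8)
The plan is to organize the proof around the sign pattern of the eigenvalues of $H_i$, exactly as in the rotation analysis preceding the statement, and to read the three displayed branches as the strictly convex case, the strictly concave case, and ``everything else.'' First I would record that $D_i$ is the closed triangle with vertices $O=(0,0)$, $A=(-x_{0,i},0)$, $B=(0,-x_{0,i})$ (the lower bounds $y_1,y_2\geq -x_{0,i}$ being redundant given $y_1,y_2\leq 0$ and $y_1+y_2\geq -x_{0,i}$), so that $D_i$ is compact; since $\nu$ is continuous, a global minimizer exists by Weierstrass. The edges are then $OA=\{y_2=0\}$, $OB=\{y_1=0\}$, and $AB=\{y_1+y_2=-x_{0,i}\}$.

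For the branch $H_i\succ 0$, I would differentiate $\nu(y)=y^TH_iy-b_i^Ty$ to obtain the unique unconstrained stationary point $\tfrac{1}{2}H_i^{-1}b_i$; strict convexity makes this the global minimizer over $\mathbb{R}^2$, so whenever it lies in $D_i$ it is also the constrained minimizer, which is the second branch. For $H_i\prec 0$, $\nu$ is strictly concave, and a concave function attains its minimum over a polytope at an extreme point; the extreme points of $D_i$ are precisely $O,A,B$, so the minimum is $\arg\min\{\nu(O),\nu(A),\nu(B)\}$, giving the first branch.

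The third branch collects the indefinite case, the semidefinite/degenerate cases, and the convex case whose stationary point is infeasible; in each I would argue that the minimizer lies on $\partial D_i$. For the convex-but-infeasible subcase this is immediate from convexity. For the indefinite and degenerate subcases I would reuse the descent argument from the rotation discussion: after diagonalizing by the orthogonal $Q_i$ (which maps $\triangle OAB$ bijectively onto $\triangle OA'B'$ and boundary to boundary, and preserves the eigenvalue signs), $\nu$ is concave or affine along the coordinate direction of a nonpositive eigenvalue, so from any interior point one of $\pm e_2$ is a non-ascent direction along which $\nu$ does not increase until the boundary is reached, ruling out interior optimality and transferring back under $Q_i^{-1}$. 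Having confined the optimum to the boundary, I would restrict $\nu$ to each edge by substituting $y_2=0$, $y_1=0$, and $y_2=-x_{0,i}-y_1$ respectively; each restriction is a one-variable quadratic on a compact interval whose minimizer is the stationary point clamped to the interval when the leading coefficient is positive and an endpoint otherwise. These are exactly $y_{OA},y_{OB},y_{AB}$, and adjoining the vertices covers the endpoint cases, so taking the argmin over the finite candidate set yields the third branch.

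The routine parts are the closed-form clamped edge minimizers and the convex/concave dichotomy. The step I expect to require the most care is the boundary-confinement claim in the degenerate subcase $\theta_i=0$: there $\nu$ is only affine, not strictly concave, along one direction, so the descent argument must be phrased so that a non-strict non-ascent direction still forces the optimum onto an edge. I would also need to verify that the ``else'' candidate set $\{O,A,B,y_{OA},y_{OB},y_{AB}\}$ is genuinely exhaustive, in particular that when an edge-restricted quadratic is concave its minimum is already recaptured by one of the three vertices, so that no boundary optimum is missed.
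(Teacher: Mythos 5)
Your proposal is correct and follows essentially the same route as the paper: existence by compactness, a case split on the signs of the eigenvalues of $H_i$, the orthogonal-diagonalization descent argument to confine the indefinite and degenerate cases to the boundary of the triangle, and edge-wise one-variable quadratics whose clamped stationary points give $y_{OA},y_{OB},y_{AB}$ with the vertices absorbing the concave-edge endpoints. Your explicit attention to the $\theta_i=0$ case and to why the candidate set remains exhaustive when an edge restriction is concave is a slight tightening of the paper's discussion, but the underlying argument is the same.
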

	As we've done above, we discuss the solution of (\ref{lag}) in detail, and almost give the explicit solution, which results in great imporve in solving the Lagrangian subproblem.

\section{Convex relaxation}
\subsection{Estimate of the upper bound}
	To make the problem more concise, after some notation, we convert the initial problem (\ref{ini}) into the form as below
	\begin{equation}\label{17}
		\begin{aligned}
				\min\quad& f(y)=y^T Q_0 y + q_0^Ty + c_0\\
				s.t.\quad& y^T Q_1 y + q_1^Ty + c_1\leq0,\\
				& y^T Q_2 y + q_2^Ty + c_2\leq0,\\
				&y = (y_1^T, y_2^T)^T\in D.\\
				&D=\{(y_1,y_2)|-x_0\leq y_1,y_2\leq0,y_1+y_2\geq-x_0.\}
			\end{aligned}
	\end{equation}
		where
		\begin{equation*}
				\begin{aligned}
						Q_0&=\begin{bmatrix}
								\Lambda - \frac{\Gamma}{2} &-\frac{\pi}{2}\Gamma\\
								-\frac{\pi}{2}\Gamma & \pi(\Lambda - \frac{\Gamma}{2})
							\end{bmatrix}, q_0=-\begin{bmatrix}
							\Gamma x_0\\
							\pi\Gamma x_0
						\end{bmatrix}, c_0 = \pi\delta-e_0.\\
				Q_1 &= \begin{bmatrix}
						\rho_1(\Lambda-\frac{\Gamma}{2})+\Lambda+\frac{\Gamma}{2} &O\\
						O &O
					\end{bmatrix}, q_1 = \begin{bmatrix}
					p_0-\rho_1\Gamma x_0\\
					O
				\end{bmatrix},c_1 = l_0-\rho_1 e_0.\\
				Q_2 &= \begin{bmatrix}
						\Lambda+\Gamma/2+\rho_2(\Lambda-\frac{\Gamma}{2}) & \frac{1-\rho_2}{2}\Gamma\\
						\frac{1-\rho_2}{2}\Gamma & \Lambda+\Gamma/2+\rho_2(\Lambda-\frac{\Gamma}{2}),\\
					\end{bmatrix}\\
					q_2&=\begin{bmatrix}
							p_0-\rho_2\Gamma x_0\\
							p_0-\rho_2\Gamma x_0
						\end{bmatrix}, c_2=l_0-\rho_2e_0+(\rho_2+1)\delta.
						\end{aligned}
							\end{equation*}
	Let's first recall the Spectral Decomposition theorem.
	\begin{theorem}
		Let $M$ be a real symmetric $n\times n$ matrix with eigenvalues $v_1,...,v_n$ and corresponding orthonormal eigenvectors $\boldsymbol{u}_1,...,\boldsymbol{u}_n$,then:
		\begin{equation*}
			M=\sum_{i=1}^{n}v_i\boldsymbol{u}_i\boldsymbol{u}_i^T.
		\end{equation*}
	\end{theorem}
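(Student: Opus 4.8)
The plan is to exploit the fact that the hypothesis already furnishes $n$ orthonormal eigenvectors, so that the content of the statement reduces to a short algebraic identity. First I would collect the eigenvectors into a single matrix $U=[\boldsymbol{u}_1\,\boldsymbol{u}_2\,\cdots\,\boldsymbol{u}_n]$. Orthonormality of the columns is precisely the statement $U^TU=I$, so $U$ is orthogonal and $U^{-1}=U^T$. The $n$ eigenvalue relations $M\boldsymbol{u}_i=v_i\boldsymbol{u}_i$ can then be written compactly as $MU=UD$, where $D=\mathrm{diag}(v_1,\dots,v_n)$; multiplying on the right by $U^T$ yields the factorization $M=UDU^T$.

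The second step is to expand this factorization into the claimed sum of rank-one terms. I would write $D=\sum_{i=1}^n v_i\,\boldsymbol{e}_i\boldsymbol{e}_i^T$ with $\boldsymbol{e}_i$ the standard basis vectors, so that $M=UDU^T=\sum_{i=1}^n v_i\,(U\boldsymbol{e}_i)(U\boldsymbol{e}_i)^T=\sum_{i=1}^n v_i\,\boldsymbol{u}_i\boldsymbol{u}_i^T$, using $U\boldsymbol{e}_i=\boldsymbol{u}_i$. Alternatively --- and perhaps more transparently --- I would verify the identity by letting both sides act on an arbitrary $\boldsymbol{x}\in\mathbb{R}^n$: since the orthonormal eigenvectors form a basis, $\boldsymbol{x}=\sum_i(\boldsymbol{u}_i^T\boldsymbol{x})\boldsymbol{u}_i$, whence $M\boldsymbol{x}=\sum_i(\boldsymbol{u}_i^T\boldsymbol{x})v_i\boldsymbol{u}_i=\big(\sum_i v_i\boldsymbol{u}_i\boldsymbol{u}_i^T\big)\boldsymbol{x}$, and equality for every $\boldsymbol{x}$ forces the matrix identity.

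The one genuinely substantive ingredient --- the existence of a full orthonormal set of eigenvectors for a real symmetric $M$ --- is already granted in the hypothesis, so the remaining work is only careful bookkeeping rather than a real obstacle. Were that existence not assumed, the hard part would instead be establishing orthogonal diagonalizability: one first shows all eigenvalues are real via the conjugate inner product $\overline{\boldsymbol{x}}^TM\boldsymbol{x}$ together with $M=M^T$, and then argues by induction on $n$ that the orthogonal complement of a unit eigenvector is $M$-invariant, peeling off one orthonormal eigenvector at a time. Since the statement as quoted sidesteps this by hypothesis, I expect the whole proof to be brief, the only care needed being the correct handling of the outer-product expansion and the recognition that each $\boldsymbol{u}_i\boldsymbol{u}_i^T$ is the rank-one orthogonal projector onto $\mathrm{span}(\boldsymbol{u}_i)$.
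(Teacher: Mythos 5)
Your proof is correct: the passage from the hypothesis to $MU=UD$, then to $M=UDU^{T}$, and finally to the rank-one expansion $\sum_{i=1}^{n}v_i\boldsymbol{u}_i\boldsymbol{u}_i^{T}$ is exactly the standard argument, and your alternative verification by acting on an arbitrary vector is equally valid. The paper itself offers no proof of this statement --- it is explicitly ``recalled'' as the classical spectral decomposition theorem and used only as a tool to split each $Q_i$ into $Q_i^{+}-Q_i^{-}$ --- so there is nothing to compare against; your write-up simply supplies the routine justification the paper omits.
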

	With the theorem above, we have the following decomposition
	\begin{equation}
		\begin{aligned}
			Q_0=A^+-A^-,A^+=\sum_{i=p+1}^{2m}a_i\zeta_i\zeta_i^T,A^-=\sum_{i=1}^{p}|a_i|\zeta_i\zeta_i^T,\\
			Q_1=B^+-B^-,B^+=\sum_{i=q+1}^{2m}b_i\eta_i\eta_i^T,B^-=\sum_{i=1}^{q}|b_i|\eta_i\eta_i^T,\\
			Q_2=C^+-C^-,C^+=\sum_{i=r+1}^{2m}c_i\theta_i\theta_i^T,C^-=\sum_{i=1}^{r}|c_i|\theta_i\theta_i^T.
		\end{aligned}
	\end{equation}
	where
	\begin{equation*}
		\begin{aligned}
			a_i<0,i=1,...p,a_i\geq0,i=p+1,...,2m,\\
			b_i<0,i=1,...q,b_i\geq0,i=q+1,...,2m,\\
			c_i<0,i=1,...r,c_i\geq0,i=r+1,...,2m.
		\end{aligned}
	\end{equation*}
	$a_i, b_i, c_i$ and $\zeta_i, \eta_i, \theta_i$ are eigenvalues and corresponding eigenvectors of the matrix.
	
	It's easy to see that $A^-,B^-,C^-$ are positive semidefinite matrix. So for every $u\in D$, we always have
	\begin{equation*}
		\begin{aligned}
		(y-u)^TA^-(y-u)&\geq0\\
		y^TA^-y&\geq2u^TA^-y-u^TA^-u
	\end{aligned}
	\end{equation*}
	the above estimators happens to be a supporting hyperplane of $y^TA^-y$ at $u$. So as $B^-,C^-$. Then we use the linear terms to approximate the concave quadratic terms in the objective and constraints of the problem (\ref{17}), and the relexation problem can be reformulated as
	\begin{equation}\label{cr}
		\begin{aligned}
			\min\quad& f_u(y)=y^T A^+ y -2u^TA^-y+ q_0^Ty +u^TA^-u  + c_0\\
			s.t.\quad& y^T B^+ y -2u^TB^-y+ q_1^Ty +u^TB^-u+ c_1\leq0,\\
			& y^T C^+ y -2u^TC^-y+ q_2^Ty +u^TC^-u+ c_2\leq0,\\
			&y = (y_1^T, y_2^T)^T\in D.\\
			&D=\{(y_1,y_2)|-x_0\leq y_1,y_2\leq0,y_1+y_2\geq-x_0.\}
		\end{aligned}
	\end{equation}
	We denote the feasible region of the initial problem (\ref{P}) and the above convex relaxation problem as 
	\begin{equation*}
		\begin{aligned}
		F&=\{y\in D| g(y)\leq0,h(y)\leq0\},\\
		F_u&=\{y\in D| g_u(y)\leq0,h_u(y)\leq0\}.
				\end{aligned}
	\end{equation*}

	Relying on the convex relaxation problem (\ref{cr}), we proposed the corresponding SCO algorithm\cite{lhz2023} and show some important properties.
	
	\begin{breakablealgorithm}
		\caption{SCO}\label{sco_algo}
		\begin{algorithmic}
			\STATE Initialization: find feasible point $y^0\in F$ and the torlence $\epsilon$.
			\STATE Set $u^0=y^0$,k=0.
			\IF{$\Vert u^{k+1}-u^k\Vert\geq\epsilon$}
			\STATE Solve problem(\ref{cr}) with $u = u^k$ to find the optimal solution $y^{k+1}$. Then set $u^{k+1}=y^{k+1}$.
			\STATE $k=k+1$.
			\ELSE
			\STATE	Output $y^{k+1}$ as the final solution.
			\ENDIF
		\end{algorithmic}
	\end{breakablealgorithm}
	
	\begin{proposition}
		\begin{itemize}
			\item[(i)] $F_u$ is a convex subset of $F$.
		\end{itemize}
	\end{proposition}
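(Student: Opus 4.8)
The plan is to split the statement into two independent claims: that $F_u \subseteq F$, and that $F_u$ is convex. These are proved by entirely different mechanisms — the inclusion uses the supporting-hyperplane estimate, the convexity uses positive semidefiniteness — so I would treat them separately.

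For the inclusion, I would show that the relaxed constraint functions pointwise dominate the original ones on $D$, i.e. $g(y) \le g_u(y)$ and $h(y) \le h_u(y)$ for every $y \in D$. This is immediate from the supporting-hyperplane inequality already derived above: since $B^-$ is positive semidefinite, $(y-u)^T B^-(y-u) \ge 0$ gives $y^T B^- y \ge 2u^T B^- y - u^T B^- u$, hence $-y^T B^- y \le -2u^T B^- y + u^T B^- u$. Writing $g(y) = y^T B^+ y - y^T B^- y + q_1^T y + c_1$ and substituting this bound yields $g(y) \le g_u(y)$; the same argument with $C^-$ gives $h(y) \le h_u(y)$. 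Consequently, any $y \in F_u$ satisfies $g(y) \le g_u(y) \le 0$ and $h(y) \le h_u(y) \le 0$, so $y \in F$, establishing $F_u \subseteq F$.

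For convexity, I would observe that $F_u$ is the intersection of the polyhedron $D$ with the two sublevel sets $\{y : g_u(y) \le 0\}$ and $\{y : h_u(y) \le 0\}$. Each relaxed function is a convex quadratic: in $g_u(y) = y^T B^+ y - 2u^T B^- y + q_1^T y + u^T B^- u + c_1$ the only quadratic term is $y^T B^+ y$, and $B^+ = \sum_{i=q+1}^{2m} b_i \eta_i \eta_i^T$ with $b_i \ge 0$ is positive semidefinite, so $g_u$ is convex; the remaining terms are affine in $y$ (for fixed $u$) or constant. The same holds for $h_u$ via $C^+ \succeq 0$. Hence the two sublevel sets are convex, $D$ is a convex polyhedron, and $F_u$, being their intersection, is convex.

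There is no genuinely hard step here; the result is a direct consequence of the spectral decomposition together with the supporting-hyperplane estimate established just above. The only point requiring care is to invoke correctly that $B^+$ and $C^+$ are positive semidefinite — this is exactly what the decomposition with the sign convention $b_i, c_i \ge 0$ on the retained eigenvalues guarantees — since this single fact simultaneously makes the relaxed functions convex (for the convexity claim) and isolates the concave part into $B^-, C^-$ (for the inclusion claim).
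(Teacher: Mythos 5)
Your proof is correct and follows exactly the route the paper relies on: the paper actually states this proposition without an explicit proof, but the two ingredients you use --- the supporting-hyperplane inequality $y^TB^-y \ge 2u^TB^-y - u^TB^-u$ derived just above, and the identity $g_u(y) = g(y) + (y-u)^TB^-(y-u)$ (resp.\ $h_u$, $C^-$) which appears in the proof of the following proposition --- are precisely what the text sets up for this purpose, and your convexity argument via $B^+, C^+ \succeq 0$ and the polyhedrality of $D$ is the intended (and only natural) one. No gaps.
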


	\begin{proposition}
		The sequence ${y^k}$ and ${u^k}$ are generated or chosen by algorithm $\ref{sco_algo}$ are satisfying that
		\begin{itemize}
			\item [$(1)$] ${y^k}\subseteq F$.
			\item [$(2)$] $f(y^k)-f(y^{k+1})\geq (u^{k+1}-u^{k})^TA^-(u^{k+1}-u^{k})$.
		\end{itemize}
	\end{proposition}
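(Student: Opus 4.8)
The plan is to reduce everything to one algebraic identity that compares the relaxed functions in problem (\ref{cr}) with the original ones in problem (\ref{17}). Writing $g(y)=y^TQ_1y+q_1^Ty+c_1$ and $h(y)=y^TQ_2y+q_2^Ty+c_2$ for the two original constraint functions, I would first record that for any linearization point $u$,
\[
f_u(y)=f(y)+(y-u)^TA^-(y-u),\quad g_u(y)=g(y)+(y-u)^TB^-(y-u),\quad h_u(y)=h(y)+(y-u)^TC^-(y-u).
\]
Each of these follows by expanding the linearized concave part against the quadratic one, e.g. $y^TA^-y-(2u^TA^-y-u^TA^-u)=(y-u)^TA^-(y-u)$, together with $Q_0=A^+-A^-$ and the analogous splittings of $Q_1,Q_2$. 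Since $A^-,B^-,C^-\succeq0$, every correction term is nonnegative, so $f_u\geq f$, $g_u\geq g$, $h_u\geq h$ pointwise, with equality exactly at $y=u$. These tightness-at-$u$ facts are the engine of the whole argument.

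For part $(1)$ I would argue by induction on $k$. The base case $y^0\in F$ holds by the initialization of Algorithm \ref{sco_algo}. For the step, $y^{k+1}$ is by construction optimal for problem (\ref{cr}) with $u=u^k$, hence $y^{k+1}\in F_{u^k}$; the inequalities $g_{u^k}\geq g$ and $h_{u^k}\geq h$ then force $g(y^{k+1})\leq g_{u^k}(y^{k+1})\leq0$ and $h(y^{k+1})\leq h_{u^k}(y^{k+1})\leq0$, so $y^{k+1}\in F$. This is precisely the containment $F_{u^k}\subseteq F$ established in Proposition (i), so one may instead simply invoke that proposition. Either way $\{y^k\}\subseteq F$.

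For part $(2)$ the key point is that the current iterate is feasible for its own convex subproblem. Because $u^k=y^k$, the correction terms in the identities above vanish at $y=u^k$, giving $g_{u^k}(y^k)=g(y^k)\leq0$ and $h_{u^k}(y^k)=h(y^k)\leq0$, where the inequalities use $y^k\in F$ from part $(1)$; hence $y^k\in F_{u^k}$. Optimality of $y^{k+1}$ over $F_{u^k}$ then yields $f_{u^k}(y^{k+1})\leq f_{u^k}(y^k)$. Evaluating both sides via the objective identity, the right-hand side collapses to $f_{u^k}(y^k)=f(y^k)$, while the left-hand side is $f_{u^k}(y^{k+1})=f(y^{k+1})+(y^{k+1}-u^k)^TA^-(y^{k+1}-u^k)$. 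Rearranging and substituting $u^{k+1}=y^{k+1}$, $u^k=y^k$ gives
\[
f(y^k)-f(y^{k+1})\geq(u^{k+1}-u^k)^TA^-(u^{k+1}-u^k),
\]
which is the desired inequality.

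Once the identity is in hand the proof is essentially bookkeeping, so I do not expect any genuinely hard calculation. The single conceptual step—and the one I would flag as the crux—is recognizing that $y^k\in F_{u^k}$, i.e. that the previous iterate stays feasible for the next subproblem. This rests entirely on the supporting-hyperplane majorants being tight at the linearization point $u^k=y^k$, and it is exactly the step that couples consecutive iterations and delivers the monotone decrease; it would fail if the linearization were taken anywhere other than at $u^k=y^k$.
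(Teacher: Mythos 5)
Your proposal is correct and follows essentially the same route as the paper: both rest on the identities $f_u(y)=f(y)+(y-u)^TA^-(y-u)$ (and their analogues for $g,h$), use tightness at $y=u^k=y^k$ to get $y^k\in F_{u^k}$ and $F_{u^k}\subseteq F$ for part (1), and then derive part (2) from the optimality of $y^{k+1}$ over $F_{u^k}$. Your write-up merely makes explicit the induction that the paper leaves implicit after the $k=0$ step.
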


	\begin{proof}
		\begin{itemize}
			\item [(1)] From Cut-Plane Algorithm, $y^0\in F$ and $u^0=y^0\in F$, by the construction of convex relaxation, we have
			\begin{equation*}
				\begin{aligned}
					g_u(y) &= g(y) + (y-u)^TB^-(y-u),\\
					h_u(y) &= h(y) + (y-u)^TC^-(y-u).
				\end{aligned}
			\end{equation*}
			so $g_{u^0}(y^0)=g(y^0),h_{u^0}(y^0)=h(y^0)$, then $y^0\in F_{u^0},F_{u^0}\neq\emptyset$. By the algorithm we proposed above, $y^1\in F_{u^0}\subseteq F$, we reach
			\begin{equation*}
				\{y^k\}\in F.
			\end{equation*}
		\item[(2)] From above, we have ${y^k}\subseteq F$. By the algorithm, it's easy to verify that $y^k\in F_{u^k}$. Notice that by construction 
		\begin{equation*}
			f_u(y) = f(y) + (y-u)^TA^-(y-u).
		\end{equation*}
		Since $y^{k+1}$ is the optimal solution of convex relaxation problem (\ref{cr}) with $u = u^{k}$, by $y^k\in F_{u^k}$, we have
		\begin{equation*}
			\begin{aligned}
				f_{u^k}(y^{k+1})&\leq f_{u^k}(y^{k})\\
			f(y^{k+1})+(y^{k+1}-u^k)^TA^-(y^{k+1}-u^k)&\leq f(y^k) + (y^k-u^k)^TA^-(y^k-u^k).
						\end{aligned}
		\end{equation*}
		from the fact that $u^k=y^k$, we have 
		\begin{equation*}
			f(y^k) - f(y^{k+1})\geq (u^{k+1}-u^k)^TA^-(u^{k+1}-u^k)
		\end{equation*}
		\end{itemize}
	\end{proof}

\subsection{Estimate of the lower bound}
	For positive semi-definite matrix $M$, we have the following observation
	\begin{equation*}
		y^TMy\leq \lambda_{max}(M)y^Ty.
		y^TMy\leq \Vert M \Vert_2y^Ty.
	\end{equation*}
	where $\lambda_{max}$ is the max eigenvalue of matrix $M$.

	With this fact, we facilitate (\ref{17}) as 
	\begin{equation}
		\begin{aligned}
			\min\quad& y^T A^+ y -\lambda_{max}(A^-)y^Ty+ q_0^Ty + c_0\\
			s.t.\quad& y^T B^+ y -\lambda_{max}(B^-)y^Ty+ q_1^Ty + c_1\leq0,\\
			& y^T C^+ y -\lambda_{max}(C^-)y^Ty+ q_2^Ty + c_2\leq0,\\
			&y = (y_1^T, y_2^T)^T\in D.\\
			&D=\{(y_1,y_2)|-x_0\leq y_1,y_2\leq0,y_1+y_2\geq-x_0.\}
		\end{aligned}
	\end{equation}
	Substitute the negative quadratic term $-y^Ty$ with its convex envelope, we can reformulate the problem as a convex relaxation of (\ref{17}), which accounts for the estimate of the lower bound of initial problem. The relaxation problem can be described as 
	\begin{equation}
		\begin{aligned}
			\min\quad& y^T A^+ y -\lambda_{max}(A^-)\sum_{i=1}^{2m}t_i+ q_0^Ty + c_0\\
			s.t.\quad& y^T B^+ y -\lambda_{max}\sum_{i=1}^{2m}t_i+ q_1^Ty + c_1\leq0,\\
			& y^T C^+ y -\lambda_{max}(C^-)\sum_{i=1}^{2m}t_i+ q_2^Ty + c_2\leq0,\\
			&y = (y_1^T, y_2^T)^T\in D,\\
			&D=\{(y_1,y_2)|-x_0\leq y_1,y_2\leq0,y_1+y_2\geq-x_0\},\\
			&y_i^2\leq t_i, t_i\leq(l_i+u_i)y_i-l_iu_i,\\
			&[l_i,u_i]\subseteq[-x_{0,i},0],i=1,...2m.
		\end{aligned}
	\end{equation}

\subsection{Estimate of the lower bound through McCormick envelope}
	Before we construct the convex relaxation aiming for the estimate of  lower bound, we first explore a well-known envelope using in relaxation problem.
	\begin{proposition} McCormick envelope can be presented as following forms
		\begin{equation*}
			\begin{aligned}
				\omega &= xy,\quad\underline{x}\leq x\leq \bar{x}, \underline{y}\leq y\leq\bar{y},\\
				\omega&\geq \underline{x}y+x\underline{y}-\underline{x}\underline{y},\quad \omega\geq\bar{x}y+x\bar{y}-\bar{x}\bar{y},\\
				\omega&\leq\bar{x}y+x\underline{y}-\bar{x}\underline{y},\quad\omega\leq x\bar{y}+\underline{x}y-\underline{x}\bar{y}.
			\end{aligned}
		\end{equation*}
	\end{proposition}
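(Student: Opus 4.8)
The plan is to establish each of the four McCormick inequalities directly from the elementary fact that a product of two nonnegative reals is nonnegative, using only the box bounds $\underline{x}\le x\le\bar{x}$ and $\underline{y}\le y\le\bar{y}$. These bounds guarantee that the four deviations $x-\underline{x}$, $\bar{x}-x$, $y-\underline{y}$, and $\bar{y}-y$ are each nonnegative, and every McCormick inequality will arise by expanding the product of one deviation in $x$ with one deviation in $y$ and then substituting $\omega=xy$.

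First I would treat the two lower estimates, which come from the same-sign products. Since $(x-\underline{x})(y-\underline{y})\ge0$, expanding gives $xy-\underline{x}y-x\underline{y}+\underline{x}\underline{y}\ge0$, which rearranges to $\omega\ge\underline{x}y+x\underline{y}-\underline{x}\underline{y}$. Likewise $(\bar{x}-x)(\bar{y}-y)\ge0$ expands to $\bar{x}\bar{y}-\bar{x}y-x\bar{y}+xy\ge0$, giving $\omega\ge\bar{x}y+x\bar{y}-\bar{x}\bar{y}$. These are precisely the two supporting affine functions that constitute the convex (lower) envelope.

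Next I would handle the two upper estimates using the cross products. From $(\bar{x}-x)(y-\underline{y})\ge0$ one obtains $\bar{x}y-\bar{x}\underline{y}-xy+x\underline{y}\ge0$, hence $\omega\le\bar{x}y+x\underline{y}-\bar{x}\underline{y}$; and from $(x-\underline{x})(\bar{y}-y)\ge0$ one gets $x\bar{y}-xy-\underline{x}\bar{y}+\underline{x}y\ge0$, hence $\omega\le x\bar{y}+\underline{x}y-\underline{x}\bar{y}$, the two affine functions forming the concave (upper) envelope. Collecting these four displays recovers the stated form exactly.

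Since each step is a one-line sign argument, there is no genuine obstacle here; the only point requiring attention is the bookkeeping of which pair of deviations produces which bound, namely that same-sign pairs (both lower or both upper) yield the two lower bounds on $\omega$ while the two cross pairs yield the upper bounds. If one wishes to justify the word \emph{envelope} rather than mere validity, one could additionally note that each of the four inequalities is tight at two vertices of the box, so the pointwise maximum of the two lower estimates and the pointwise minimum of the two upper estimates are in fact the convex and concave envelopes of $xy$ on the rectangle; but the proposition as stated only asserts the inequalities, which the four sign arguments supply immediately.
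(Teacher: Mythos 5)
Your proof is correct and complete: the four inequalities follow exactly as you say from expanding the nonnegative products $(x-\underline{x})(y-\underline{y})$, $(\bar{x}-x)(\bar{y}-y)$, $(\bar{x}-x)(y-\underline{y})$, and $(x-\underline{x})(\bar{y}-y)$. The paper gives no proof at all (it states only that ``the derivation is obvious''), and your sign-product argument is precisely the standard derivation being alluded to, so there is nothing to compare beyond noting that you have supplied the omitted details, including the correct pairing of same-sign deviations with the lower bounds and cross deviations with the upper bounds.
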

	The derivation is obvious. As for quadratic form $x^TAx, A\in S^n,A\succeq0$, we can also show similar propositions.
	\begin{proposition}
		$\forall A\in S^n,A\succeq0$, we give the proper overestimator of $x^TAx,x\in[\underline{x},\bar{x}]$ as following
			\begin{equation}\label{lowerbound}
			x^TAx\leq(\bar{x}+\underline{x})^TAx-\bar{x}^TA\underline{x}.
		\end{equation}
	\end{proposition}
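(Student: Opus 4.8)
The plan is to reduce the claimed inequality to a single nonnegativity statement about a bilinear form. Writing the proposed overestimator as the affine function $\ell(x) = (\bar{x}+\underline{x})^TAx - \bar{x}^TA\underline{x}$, I would first record that $\ell$ interpolates the quadratic at the two extreme corners, i.e. $\ell(\underline{x})=\underline{x}^TA\underline{x}$ and $\ell(\bar{x})=\bar{x}^TA\bar{x}$, which already identifies $\ell$ as the natural chord overestimator. The substantive step is to compute the gap $\ell(x)-x^TAx$. Expanding and using the symmetry $A=A^T$ (so that $\underline{x}^TAx = x^TA\underline{x}$ and $\bar{x}^TAx = x^TA\bar{x}$), I would establish the identity
\begin{equation*}
\ell(x) - x^TAx = (\bar{x}-x)^TA(x-\underline{x}),
\end{equation*}
after which the proposition is \emph{equivalent} to the single inequality $(\bar{x}-x)^TA(x-\underline{x})\geq 0$ on the box.

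To prove this nonnegativity I would tie it directly to the scalar McCormick overestimator of the preceding proposition. Writing $x^TAx=\sum_{i,j}A_{ij}x_ix_j$ and applying the upper bound $x_ix_j \leq \bar{x}_ix_j + x_i\underline{x}_j - \bar{x}_i\underline{x}_j$ to each product, then weighting by $A_{ij}$ and summing over $i,j$, the right-hand side collapses (again by symmetry of $A$) exactly to $\ell(x)$. Equivalently, setting $s=\bar{x}-x$ and $t=x-\underline{x}$, both componentwise nonnegative on the box, one has $(\bar{x}-x)^TA(x-\underline{x})=\sum_{i,j}A_{ij}s_it_j$ with $s_i,t_j\geq 0$, which is manifestly nonnegative once the entries $A_{ij}$ carry the right sign.

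The main obstacle is precisely this sign bookkeeping in the final sum. The factors $s$ and $t$ are nonnegative vectors, but $A\succeq 0$ by itself does \emph{not} force $s^TAt\geq 0$ for arbitrary nonnegative $s,t$; the termwise-McCormick combination preserves the inequality direction only when the combining weights $A_{ij}$ are nonnegative. I would therefore lean on the structure available in the application, where the relevant matrices are built from the diagonal, positive matrices $\Gamma,\Lambda$, so that each term $A_{ij}s_it_j$ is nonnegative and the bound follows term by term. As a fallback that holds for every $A\succeq 0$ but only along the segment, I would note that parametrising $x=(1-\lambda)\underline{x}+\lambda\bar{x}$ makes $x^TAx$ a convex function of $\lambda$ whose endpoint chord is exactly $\ell$, so convexity gives $x^TAx\leq \ell(x)$ immediately. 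Reconciling the full-box claim with mere positive semidefiniteness is the delicate point, and I expect the proof to hinge on the entrywise sign structure of $A$ rather than on definiteness alone.
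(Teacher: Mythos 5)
You reach exactly the identity on which the paper's own proof rests, namely $\ell(x)-x^TAx=(\bar{x}-x)^TA(x-\underline{x})$, and the obstacle you flag at the end is not a defect of your write-up: it is a genuine gap in the proposition and in the paper's proof. The paper observes that $F(x)=(\bar{x}-x)^TA(x-\underline{x})$ is concave (since $F''=-2A\preceq0$) and concludes $F\geq0$ on $[\underline{x},\bar{x}]$ from $F(\underline{x})=F(\bar{x})=0$. But a concave function on the box $[\underline{x},\bar{x}]\subseteq\mathbb{R}^n$ attains its minimum at one of the $2^n$ vertices, not merely at the two antipodal corners $\underline{x}$ and $\bar{x}$; checking those two points settles only the case $n=1$, or the line segment joining them (which is precisely your convexity fallback). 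For $n\geq2$ the claim fails for general $A\succeq0$: take $A=\begin{bmatrix}1&-1\\-1&1\end{bmatrix}\succeq0$, $\underline{x}=(0,0)^T$, $\bar{x}=(1,1)^T$ and $x=(1,0)^T$; then $x^TAx=1$ while $(\bar{x}+\underline{x})^TAx-\bar{x}^TA\underline{x}=0$, so the claimed overestimator underestimates. Equivalently, $F(x)=(\bar{x}-x)^TA(x-\underline{x})=s^TAt$ with $s,t\geq0$, and positive semidefiniteness controls $s^TAs$, not the bilinear form $s^TAt$ for distinct nonnegative vectors --- exactly the sign issue you isolated.

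Your two proposed repairs are the right ones, and it is worth being explicit about what each buys. Requiring $A$ to be entrywise nonnegative makes the termwise McCormick summation legitimate and proves the full-box inequality; requiring only $A\succeq0$ proves it only along the segment $x=(1-\lambda)\underline{x}+\lambda\bar{x}$, which is not enough for the branch-and-bound use, since the relaxation \eqref{lb_convex_relaxation} must underestimate $f$ over the whole sub-rectangle. Note, however, that in this paper the matrices playing the role of $A$ are $A^-,B^-,C^-$, obtained from the \emph{spectral} decomposition $Q=Q^+-Q^-$; their entries are sums $\sum_i|a_i|\zeta_i\zeta_i^T$ over eigenvectors of mixed sign and are not entrywise nonnegative in general, so one cannot simply ``lean on the structure of the application'' as you suggest. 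A correct fix is to replace \eqref{lowerbound} by a bound that is valid for every positive semidefinite $A$ over the whole box, e.g.\ the eigenvalue bound $x^TAx\leq\lambda_{\max}(A)\,x^Tx$ followed by the componentwise McCormick overestimator of $x_i^2$ (this is exactly the paper's alternative relaxation in the preceding subsection), or a diagonal-dominance shift of $A$. As it stands, both the proposition and the lower-bounding relaxation built on it need this repair; your analysis correctly locates the defect even though it does not (and cannot) close the proof of the statement as written.
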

	\begin{proof}
		\begin{equation*}
			\begin{aligned}
				F(x)&=-x^TAx+(\bar{x}+\underline{x})^TAx-\bar{x}^TA\underline{x},x\in[\underline{x},\bar{x}],\\
				&=(\bar{x}-x)^TA(x-\underline{x}),\\
				F'(x)&=A(\bar{x}+\underline{x}-2x),\\
				F''(x)&=-2A.
			\end{aligned}
		\end{equation*}
	Since the objective function is concave, and the KKT point is located in the interval, so the minimum is available at the endpoint of the interval, the maximum is available at the KKT point. And we have
	\begin{equation*}
		F(\bar{x})=F(\underline{x})=0\leq F(x)\leq F(\frac{\bar{x}+\underline{x}}{2}).
	\end{equation*}
	After some algebra, we finally reach
	\begin{equation*}
		\begin{aligned}
			x^TAx&\leq(\bar{x}+\underline{x})^TAx-\bar{x}^TA\underline{x},\\
			x^TAx&\geq(\bar{x}+\underline{x})^TAx-(\frac{\bar{x}+\underline{x}}{2})^TA(\frac{\bar{x}+\underline{x}}{2}).
		\end{aligned}
	\end{equation*}
	The latter inequality happens to be the supporting hyperplane at $x=\frac{\bar{x}+\underline{x}}{2}$.
	\end{proof}

	With the inequality (\ref{lowerbound}), we can reformulate the problem as a convex relaxation of (\ref{17}), which accounts for the estimate of the lower bound of initial problem. The relaxation problem can be described as
	\begin{equation}\label{lb_convex_relaxation}
		\begin{aligned}
			\min\quad& y^T A^+ y -((\bar{y}+\underline{y})^TA^-y-\bar{y}^TA^-\underline{y})+ q_0^Ty + c_0\\
			s.t.\quad& y^T B^+ y -((\bar{y}+\underline{y})^TB^-y-\bar{y}^TB^-\underline{y})+ q_1^Ty + c_1\leq0,\\
			& y^T C^+ y -((\bar{y}+\underline{y})^TC^-y-\bar{y}^TC^-\underline{y})+ q_2^Ty + c_2\leq0,\\
			&y= (y_1^T, y_2^T)^T\in D.\\
			&D=\{(y_1,y_2)|-x_0\leq y_1,y_2\leq0,y_1+y_2\geq-x_0.\},\\
			&y\in[\underline{y},\bar{y}].
		\end{aligned}
	\end{equation}
	
	\begin{theorem}
		Suppose that $y^*$ is the optimal solution of problem (\ref{lb_convex_relaxation}), and the corresponding optimal value is $v^*_{[\underline{y},\bar{y}]}$, then we have
		\begin{equation*}
			\begin{aligned}
			f(y^*)-v^*_{[\underline{y},\bar{y}]}\leq\frac{1}{4}\Vert A^-\Vert_2\Vert\bar{y}-\underline{y}\Vert_2^2,\\
			h(y^*)\leq\frac{1}{4}\Vert C^-\Vert_2\Vert\bar{y}-\underline{y}\Vert_2^2.
		\end{aligned}
		\end{equation*}
	\end{theorem}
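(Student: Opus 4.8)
The plan is to realize that both bounds are nothing more than the pointwise overestimation error already isolated in the proof of the McCormick-type inequality (\ref{lowerbound}), specialized to the matrices $A^-$ and $C^-$ and evaluated at the relaxation optimizer $y^*$. I would begin with the objective. Writing $f(y)=y^TA^+y-y^TA^-y+q_0^Ty+c_0$ and letting $f_L$ denote the relaxed objective appearing in (\ref{lb_convex_relaxation}), every term except the concave piece $-y^TA^-y$ is shared verbatim by $f$ and $f_L$, so in the difference $f(y^*)-v^*_{[\underline{y},\bar{y}]}=f(y^*)-f_L(y^*)$ the $y^TA^+y$, $q_0^Ty$ and $c_0$ contributions cancel and one is left with exactly
\[
f(y^*)-v^*_{[\underline{y},\bar{y}]}=-(y^*)^TA^-y^*+(\bar{y}+\underline{y})^TA^-y^*-\bar{y}^TA^-\underline{y},
\]
which is precisely the quantity $F(y^*)$ from the proof of (\ref{lowerbound}) with $A$ replaced by $A^-$ and $x$ replaced by $y^*$.

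Next I would reuse the factorization established there, namely $F(x)=(\bar{x}-x)^TA^-(x-\underline{x})$, together with the concavity argument showing that $F$ is maximized over the box at its midpoint. Since $y^*$ is feasible for (\ref{lb_convex_relaxation}) we have $y^*\in[\underline{y},\bar{y}]$, hence
\[
F(y^*)\leq F\!\left(\tfrac{\bar{y}+\underline{y}}{2}\right)=\left(\tfrac{\bar{y}-\underline{y}}{2}\right)^TA^-\left(\tfrac{\bar{y}-\underline{y}}{2}\right)=\tfrac{1}{4}(\bar{y}-\underline{y})^TA^-(\bar{y}-\underline{y}).
\]
Because $A^-\succeq0$, the Rayleigh-quotient bound $z^TA^-z\leq\lambda_{\max}(A^-)\|z\|_2^2=\|A^-\|_2\|z\|_2^2$ applied with $z=\bar{y}-\underline{y}$ yields the first asserted inequality.

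For the constraint I would run the identical computation on $h$ but close the argument with feasibility rather than optimality. With $h(y)=y^TC^+y-y^TC^-y+q_2^Ty+c_2$ and $h_L$ the relaxed constraint function in (\ref{lb_convex_relaxation}), the same cancellation gives $h(y^*)-h_L(y^*)=(\bar{y}-y^*)^TC^-(y^*-\underline{y})$, which is bounded above by $\tfrac{1}{4}\|C^-\|_2\|\bar{y}-\underline{y}\|_2^2$ exactly as before. Since $y^*$ satisfies the relaxed constraint, $h_L(y^*)\leq0$, and therefore $h(y^*)=h_L(y^*)+\bigl(h(y^*)-h_L(y^*)\bigr)\leq\tfrac{1}{4}\|C^-\|_2\|\bar{y}-\underline{y}\|_2^2$.

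The only genuinely delicate point is the bookkeeping: I must verify that the $A^+$ (respectively $C^+$), linear and constant terms are truly untouched by the McCormick substitution so that the gap collapses to a single copy of $F$, and that the box membership $y^*\in[\underline{y},\bar{y}]$ is exactly what licenses the midpoint maximization. Neither requires a new idea, and once the discrepancy is identified as one McCormick error term, both inequalities drop out of results already in hand.
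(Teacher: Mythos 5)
Your proposal is correct and follows essentially the same route as the paper: identify the gap $f(y^*)-v^*_{[\underline{y},\bar{y}]}$ (respectively $h(y^*)-h_L(y^*)$) as the single McCormick error term $(\bar{y}-y^*)^TA^-(y^*-\underline{y})$, bound it by its value at the box midpoint using concavity, and finish with the spectral-norm estimate, invoking feasibility $h_L(y^*)\leq 0$ for the constraint bound.
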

	\begin{proof}
		\begin{equation*}
			\begin{aligned}
				f(y^*)-v^*_{[\underline{y},\bar{y}]}&=-(y^*)^TA^-y^* +(\bar{y}+\underline{y})^TA^-y^*-\bar{y}^TA^-\underline{y},\\
				&=(\bar{y}-y^*)^TA^-(y^*-\underline{y}),\\
				&\leq(\frac{\bar{y}-\underline{y}}{2})^TA^-(\frac{\bar{y}-\underline{y}}{2}),\\
				&\leq\frac{1}{4}\Vert A^-\Vert_2\Vert\bar{y}-\underline{y}\Vert_2^2.
			\end{aligned}
		\end{equation*}
	Similarly, the optimlity condition
	\begin{equation*}
		\begin{aligned}
			h(y^*)&=(y^*)^TC^+y^* - (y^*)^TC^-y^*+q^T_2y^*+c_2,\\
			&=(y^*)^T C^+ y^* -((\bar{y}+\underline{y})^TC^-y^*-\bar{y}^TC^-\underline{y})+ q_2^Ty^* + c_2\\
			&+(\bar{y}-y^*)^TC^-(y^*-\underline{y}),\\
			&\leq(\bar{y}-y^*)^TC^-(y^*-\underline{y}),\\
			&\leq(\frac{\bar{y}-\underline{y}}{2})^TC^-(\frac{\bar{y}-\underline{y}}{2})\\
			&\leq\frac{1}{4}\Vert C^-\Vert_2\Vert\bar{y}-\underline{y}\Vert_2^2.
		\end{aligned}
	\end{equation*}
	\end{proof}

\section{Algorithm and Analysis}
	Based on the convex relaxation problem (\ref{lb_convex_relaxation}) aiming at the estimate of lower bound, we propose a global algorithm, which combines the SCO algorithm and the B$\&$B framework. The algorithm is described as below.
	
	\begin{breakablealgorithm}
		\caption{SCOBB}
		\begin{algorithmic}
			\STATE \textbf{Input}:$\Lambda, \Gamma, p_0, x_0, \rho_1, \rho_2, \pi, \delta$ and tolerance $\epsilon$.
			\STATE \textbf{Output}: $\epsilon-$optimal solution $y^*$.
			\STATE \textbf{Initialization}: 
			\STATE (i)Compute $A^+,A^-,B^+,B^-,C^+,C^-$ by SVD.
			\STATE (ii) Set the initial bounds as $\bar{y}^0=0,\underline{y}^0=-x_0$.
			\STATE (iii) Calculate an initial point $y_0$ by Cut-Plane algorithm to serve for SCO algorithm.
			\STATE \textbf{Step 1}
			\STATE Find a point $y^*$ of problem (\ref{17}) by using the SCO algorithm. Set the first upper bound $v^*=f(y^*)$.
			\STATE \textbf{Step 2}
			\STATE Solve problem (\ref{lb_convex_relaxation}) over initial bounds to obtain the optimal solution $y^0$ and the first lower bound $v^0$. If $g(y^0)\leq0,h(y^0)\leq\epsilon$ and $f(y^0)<v^*$, then update upper bound $v^*=f(y^0)$ and solution $y^*=y^0$. Set $k=0, \Delta^k:=[\underline{y}^k,\bar{y}^k],\Omega:=\{[\Delta^k,v^k,y^k]\}$.
			\STATE \textbf{Step 3}
			\WHILE{$\Omega\neq\emptyset$} 
			\STATE (\textbf{Node Selection}) Choose a node $[\Delta^k,v^k,y^k]$ from $\Omega$ with the smallest lower bound $v^k$ and delete it from $\Omega$.
			\STATE (\textbf{Termination}) If $v^k\geq v^* -\epsilon$, then $y^*$ is an $\epsilon-$optimal solution to original problem, stop.
			\STATE (\textbf{Partition}) Choose $i^*$ as the index of the maximum value of $\bar{y}^k_i-\underline{y}^k_i$ for $i=1,...2m$. Then set the branch point $\beta_{i^*}=\frac{\underline{y}^k_{i^*}+\bar{y}^k_{i^*}}{2}$. Partition $\Delta^k$ into two sub-rectangles $\Delta^{k_1}$ and $\Delta^{k_2}$ along the edge $[\underline{y}^k_{i^*},\bar{y}^k_{i^*}]$ at point $\beta_{i^*}$.
			\STATE For $j=1,2$, solve problem (\ref{lb_convex_relaxation}) over $\Delta^{k_j}$ if feasible to get the optimal solution $y^{k_j}$ and the optimal value $v^{k_j}$. Set $\Omega = \Omega\cup\{[\Delta^{k_1},v^{k_1},y^{k_1}]\}\cup\{[\Delta^{k_2},v^{k_2},y^{k_2}]\}$.
			\STATE \textbf{Restart SCO}
			\STATE Set $\hat{y}=\arg\min\{f(y^{k_1}),f(y^{k_2})\}$. If $g(\hat{y})<0,h(\hat{y})<\epsilon$ and $f(\hat{y})\leq v^*-\epsilon$, then find a solution $\bar{y}^k$ of problem (\ref{17}) by using the SCO algorithm with the initial $u = \hat{y}$, update solution $y^* = \arg\min\{f(\hat{y}),f(\bar{y}^k)\}$ and upper bound $v^*=f(y^*)$.
			\STATE \textbf{Node Deletion}
			\STATE Delete all nodes $[\Delta^j,v^j,y^j]$ with $v^j\geq v^*-\epsilon$ from $\Omega$. Set $k=k+1$.
			\ENDWHILE
		\end{algorithmic}
	\end{breakablealgorithm}

\begin{theorem}
	Algorithm 3 can find an $\epsilon-$optimal solution to problem (P) via solving at most \[\prod_{i=1}^{2m}\lceil\frac{\Vert C^-\Vert_2(\bar{y}^i-\underline{y}^i)}{2\sqrt{\epsilon}}\rceil\] convex relaxation subproblem (\ref{lb_convex_relaxation}).
\end{theorem}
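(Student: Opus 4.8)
The plan is to prove this as a box-counting bound on the rectangular branch-and-bound tree that Algorithm 3 generates, with the per-node error estimate from the preceding theorem supplying the fathoming mechanism. Each rectangle $\Delta^k=[\underline{y}^k,\bar{y}^k]$ on which the lower-bounding relaxation (\ref{lb_convex_relaxation}) is solved is one node of a binary tree, and every call to (\ref{lb_convex_relaxation}) is attached to exactly one node; thus it suffices to bound the number of nodes, equivalently the number of leaves, of this tree. The crux is to show that a node is never partitioned once its rectangle is small enough, and to convert ``small enough'' into a coordinatewise edge-length threshold.

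First I would make the fathoming criterion quantitative. Let $y^k$ be the optimum of (\ref{lb_convex_relaxation}) over $\Delta^k$ with value $v^k$. The preceding theorem gives $f(y^k)-v^k\le\frac14\|A^-\|_2\|\bar{y}^k-\underline{y}^k\|_2^2$ and $h(y^k)\le\frac14\|C^-\|_2\|\bar{y}^k-\underline{y}^k\|_2^2$, and the analogous estimate with $B^-$ controls the slack in the first constraint. Hence once $\|\bar{y}^k-\underline{y}^k\|_2$ is below the level driving all three right-hand sides under $\epsilon$, the relaxation optimum $y^k$ is $\epsilon$-feasible and satisfies $f(y^k)\le v^k+\epsilon$. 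The Restart-SCO step then yields an incumbent value $v^*\le f(y^k)\le v^k+\epsilon$, so the Termination/Node-Deletion test $v^k\ge v^*-\epsilon$ fires and the node is fathomed instead of branched. Because feasibility is the binding requirement and is governed by $\|C^-\|_2$, solving $\frac14\|C^-\|_2 w^2\le\epsilon$ yields the edge-length threshold $w\le\frac{2\sqrt{\epsilon}}{\|C^-\|_2}$.

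Next I would count. The Partition step always bisects the currently longest edge, so coordinate $i$ of a descendant rectangle is halved only while its length exceeds $\frac{2\sqrt{\epsilon}}{\|C^-\|_2}$; starting from $\bar{y}^{i}-\underline{y}^{i}$ this can cut $[\underline{y}^{i},\bar{y}^{i}]$ into at most $N_i:=\lceil\frac{\|C^-\|_2(\bar{y}^{i}-\underline{y}^{i})}{2\sqrt{\epsilon}}\rceil$ subintervals before the threshold is reached. The leaves of the tree then refine the uniform grid with $N_i$ cells along coordinate $i$, so the number of leaves, and therefore of solved relaxation subproblems, is at most $\prod_{i=1}^{2m}N_i$, which is the asserted bound.

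The principal obstacle I expect is the counting step, not the estimate. Longest-edge bisection does not produce boxes that line up with a single fixed uniform grid, so the identification ``number of leaves $\le\prod_i N_i$'' requires a charging argument: I would assign each undivided leaf to a distinct cell of the reference grid of mesh $\frac{2\sqrt{\epsilon}}{\|C^-\|_2}$ and verify, using that a box is split only while some edge is still above threshold, that no cell is charged twice. A secondary point to nail down is the passage from the Euclidean estimate $\frac14\|C^-\|_2\|\bar{y}-\underline{y}\|_2^2\le\epsilon$ to the per-coordinate condition inside the product; this is where the diagonal block structure of $C^-$ enters, letting the error be controlled edge by edge so that shrinking each individual edge below $\frac{2\sqrt{\epsilon}}{\|C^-\|_2}$ --- rather than the full diameter --- already forces a fathom.
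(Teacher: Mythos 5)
The paper states this theorem with no proof at all, so there is nothing to compare your proposal against; judged on its own, your template --- show that sufficiently small boxes are fathomed via the preceding error estimate, then count leaves of the bisection tree against a reference grid --- is the standard and essentially correct strategy for this kind of complexity bound. But there are concrete gaps in the execution. First, the threshold algebra is wrong: $\frac14\Vert C^-\Vert_2 w^2\le\epsilon$ gives $w\le 2\sqrt{\epsilon/\Vert C^-\Vert_2}=2\sqrt{\epsilon}/\sqrt{\Vert C^-\Vert_2}$, not $w\le 2\sqrt{\epsilon}/\Vert C^-\Vert_2$; the per-coordinate count this actually yields is $\lceil\sqrt{\Vert C^-\Vert_2}\,(\bar y^i-\underline y^i)/(2\sqrt{\epsilon})\rceil$, which reproduces the stated formula only with $\Vert C^-\Vert_2$ replaced by its square root (and dominates the stated formula only when $\Vert C^-\Vert_2\le1$). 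Second, the error estimate you invoke is in the Euclidean norm, with $\Vert\bar y-\underline y\Vert_2^2=\sum_{i=1}^{2m}(\bar y^i-\underline y^i)^2$, so driving it below $\epsilon$ by shrinking edges one at a time forces each edge below roughly $2\sqrt{\epsilon/(2m\Vert C^-\Vert_2)}$, injecting a $\sqrt{2m}$ factor into every $N_i$ that is absent from the product. You flag this and appeal to ``diagonal block structure of $C^-$,'' but $\Vert C^-\Vert_2$ is a global operator norm and the quoted estimate does not decouple by coordinate; to eliminate the dimension factor you would have to redo the overestimation error blockwise (using that $Q_2$, hence $C^-$, splits into $2\times2$ blocks per asset), not merely cite the theorem of the previous subsection.

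Third, your fathoming mechanism assumes the incumbent drops to $v^*\le f(y^k)$ once the relaxation optimum $y^k$ on a small box is nearly feasible, but the algorithm's acceptance test requires $g(\hat y)<0$ \emph{strictly}, while the analogous $B^-$ estimate only gives $g(y^k)\le\frac14\Vert B^-\Vert_2\Vert\bar y-\underline y\Vert_2^2$, which can be positive; unless $B^-=0$ (which holds only under additional conditions on $Q_1$) the update you rely on may never fire, and the node is not deleted by the route you describe. Finally, the theorem claims two things --- the subproblem count \emph{and} that the output is $\epsilon$-optimal --- and your proposal addresses only the first: you never argue that at termination $v^*\le f(y^{\mathrm{opt}})+\epsilon$, which needs the standard observation that the box containing a global minimizer always carries a lower bound at most $f(y^{\mathrm{opt}})$, so that when that node is selected the test $v^k\ge v^*-\epsilon$ forces $v^*\le f(y^{\mathrm{opt}})+\epsilon$. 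The charging argument you defer for the leaf count is genuinely needed and is doable, but as written the proposal establishes neither the exact constant in the product nor the optimality half of the claim.
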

\newpage
\section{Numerical Experiments}
\begin{table}[!ht]
	\caption{Performances of SCOBB and SCO algorithms with $\rho_1=\rho_2=18$ and $\epsilon_{rel} = 10^{-6}$}
	\begin{tabular}{lccc|cccc|cccc}
		\hline
		\multicolumn{4}{c|}{Dimens./Parmas} & \multicolumn{4}{c|}{SCOBB} & \multicolumn{4}{c}{SCO}\\
		\hline
		&$m$ & $\pi$ & $\frac{\delta}{\delta_{\max}}$ & OptVal & Time & Iter & $\frac{l_2(y^*)}{e_2(y^*)}$ & Gap & Time & Iter & $\frac{l_2(y^*)}{e_2(y^*)}$\\
		\hline
		&100 & 0.3 & 0.8  &-9353.7215	 &5.21	 &32.0	 &18.0000  &0	 &0.73	 &14.5	 &18.0000\\
		&200 & 0.3 & 0.8  &-18790.7526	 &13.31	 &34.2	 &18.0000  &0	 &2.28	 &36.8	 &18.0000\\
		&500 & 0.3 & 0.8  &-47422.2786	 &52.62	 &22.4	 &18.0000  &0	 &28.58	 &102.1	 &18.0000\\							
		\hline
	\end{tabular}
\end{table}
	\begin{remark}
		Since the nodes picked by SCOBB algorithm in the solution pool is random, we can not guarantee that the iter is positively correlated with the dimensions; What's more, Gurobi cannot reach optimal unless the relative tolerance is set to $10^{-4}$.
	\end{remark}
\section{Promotion}
	Let's consider such QCQP problem
	\begin{equation}
		\begin{aligned}
			\min\quad &x^TQ_0x+x^Tq_0+c_0\\
			s.t.\quad &x^TQ_ix+x^Tq_i+c_i\leq0, i=1,\dots,m.\\
					  &x\in[\underline{x},\bar{x}].
		\end{aligned}
	\end{equation}
	where $Q_i\in \mathbb{S}^{n\times n}, i = 0,1,\dots,m$.
\begin{assumption}
	There exist $x_0$ in $(\underline{x},\bar{x})$, such that 
	\begin{equation*}
		x^TQ_ix+x^Tq_i+c_i<0, i=1,\dots,m.
	\end{equation*}
\end{assumption}
	The slater condition is plain to ensure that the problem is at least feasible. And Except this we don't make any convexity assumption.
	
	Since $Q_i$ is symmetric, so we can always find eigenvalues of each matrix, and use the spectral decomposition to decompose the matrix into
	\begin{equation*}
		Q_i = Q_i^+-Q_i^-
	\end{equation*}	
	As we've done before, we show the upper bound estimators by the convex relaxation problem
	\begin{equation}
		\begin{aligned}
			\min\quad & x^TQ_0^{+}x-2u^TQ_0^{-}x+q^T_0x+u^TQ_0^-u+c_0\\
			s.t.\quad & x^TQ_i^{+}x-2u^TQ_i^{-}x+q^T_ix+u^TQ_i^-u+c_i\leq0,i=1,\dots,m.\\
				&x\in[\underline{x},\bar{x}].
		\end{aligned}
	\end{equation}
	And then we apply the above convex relaxation problem to $SCO $ algorithm to generate the upper bound.
	By the derivation, we know that $u$ can be any point. However, we still recommend a feasible point since it'll accelerate the algorithm convergence speed.
	And the corresponding lowerbound convex relaxation problem as
	\begin{equation}
		\begin{aligned}
			\min\quad & x^TQ_0^+x-(\underline{x}+\bar{x})^TQ_0^-x+q^T_0x+\underline{x}Q_0^-\bar{x}+c_0\\
			s.t.\quad &x^TQ_i^+x-(\underline{x}+\bar{x})^TQ_i^-x+q^T_ix+\underline{x}Q_i^-\bar{x}+c_i\leq0,i=1,\dots,m.\\
			&x\in[\underline{x},\bar{x}].
		\end{aligned}
	\end{equation}
	Similarly, we use the $SCOBB $ algorithm to find the lower bound.
	
	Compare with Lu\cite{lu2019}, our algorithm is not sensitive with the amount of negative eigenvalues of single matrix and the quadratic constraints. And we don't need the initial point to start the $SCO$ algorithm. 

\bibliographystyle{unsrt}
\bibliography{ref}

\end{document}